\documentclass[11pt,a4paper]{article}

\usepackage[T1]{fontenc}
\usepackage[utf8]{inputenc}
\usepackage{amsmath,amssymb,amsthm,mathtools,cite}
\usepackage{enumitem}
\usepackage{tikz}
\usetikzlibrary{positioning,fit,calc,arrows.meta,backgrounds,shapes.geometric}
\usepackage{microtype}
\usepackage{hyperref}
\usepackage{fullpage}
\hypersetup{
  colorlinks=true,
  linkcolor=blue,
  citecolor=blue,
  urlcolor=blue,
  pdftitle={On Alon's problem concerning the difference between connected domination number and domination number},
  pdfauthor={C. Li and B. Zhou},
  pdfsubject={connected domination},
  pdfkeywords={domination number, connected domination number, minimum degree, probabilistic method}
}

\usepackage{authblk}

\newtheorem{theorem}{Theorem}[section]
\newtheorem{corollary}[theorem]{Corollary}
\newtheorem{proposition}[theorem]{Proposition}
\newtheorem{lemma}[theorem]{Lemma}
\theoremstyle{definition}
\newtheorem{problem}[theorem]{Problem}
\theoremstyle{remark}

\newcommand{\cF}{\mathcal F}
\DeclareMathOperator{\Bin}{Bin}
\DeclareMathOperator{\cov}{cov}
\allowdisplaybreaks

\begin{document}

\title{\bfseries
On Alon's problem concerning the difference between connected domination number and domination number}

\author[1]{ Chengli Li%
  \thanks{Email: lichengli0130@126.com}}
\author[2]{ Bo Zhou%
  \thanks{Email: zhoubo@m.scnu.edu.cn}}

\affil[1]{\footnotesize
School of Mathematical Sciences, East China Normal University,
Shanghai 200241, China}

\affil[2]{\footnotesize
School of Mathematical Sciences, South China Normal University, Guangzhou 510631, China}

\date{}
\maketitle
	
\begin{abstract}
For a connected graph $G$, let $\gamma(G)$ and $\gamma_c(G)$
denote its domination number and connected domination number, respectively. Let $M(n,k)$ be the maximum of
$\gamma_c(G)-\gamma(G)$ over all connected $n$-vertex graphs
of minimum degree at least $k$.
Alon proved that 
\[
2\left\lfloor\frac{n}{k+1}\right\rfloor-O(1)\le M(n,k)<\frac{n}{k+1}
\bigl(\log\lceil\log(k+1)\rceil+3\bigr).
\]
He proposed a problem to determine or estimate   $M(n,k)$ for $n-1\ge k\ge 3$, and particularly remarked that it would be interesting to close the $\log\log(k+1)$ gap between the upper and lower bounds and decide whether or not $M(n,k)=\Theta\bigl(\frac{n}{k+1}\bigr)$.  
We give an asymptotic answer to Alon's
problem for sufficiently large $k$.
More precisely, $M(k+1,k)=0$. For $n>k+1$, let
\[
\nu=\frac{n}{k+1}
\text{ and }
\Phi(x)=
\frac{1}{x\log\!\frac{x}{x-1}} \ (\text{for }x>1).
\]
As $k\to\infty$, uniformly over all integers $n>k+1$, 
\[
M(n,k)=
\big(\Phi(\nu)+o(1)\big)
\nu\log\log(k+1).
\]

\smallskip
\noindent\textbf{Keywords:} domination number, connected domination number, minimum degree, probabilistic method

\noindent\textbf{2020 Mathematics Subject Classification:} 05C69, 05D40, 05C35
\end{abstract}

\section{Introduction}

All graphs are finite, simple and undirected. For a graph $G$, $V(G)$ and $E(G)$ denote its vertex set and edge set, respectively, and $|V(G)|$ is its order. For $v\in V(G)$, let $N_G(v)$ be the set of neighbors of $v$ and let $N_G[v]=N_G(v)\cup\{v\}$ be its closed neighborhood. The degree of $v$ in $G$ is  $d_G(v)=|N_G(v)|$. Let  $\delta(G)$ be the  minimum degree of $G$. For $S\subseteq V(G)$,  $G[S]$ denotes the subgraph of $G$ induced by $S$.  A clique is a set of pairwise adjacent vertices. Two disjoint vertex sets $X,Y\subseteq V(G)$ are anticomplete if no edge has one endpoint in $X$ and the other in $Y$. 
For disjoint sets $X,Y\subseteq V(G)$, we say that $X$ dominates $Y$ if every vertex of $Y$ has a neighbor in $X$.

Let $G$ be a connected graph. A set $D\subseteq V(G)$ is a dominating set if every vertex in $V(G)\setminus D$ has a neighbor in $D$. Moreover, if $G[D]$ is connected, then $D$ is a connected dominating set. The minimum cardinality of a dominating set of $G$ is the domination number of $G$, written as $\gamma(G)$. 
The minimum cardinality of a connected dominating set of $G$ is the connected domination number of $G$, written as $\gamma_c(G)$. By the classical complementarity between connected dominating sets and spanning trees, 
$\gamma_c(G)$ is precisely the minimum possible number of non-leaves in a spanning tree of $G$. 
Thus, studying the maximum number of leaves in a spanning tree is equivalent to studying the connected domination number.
Connected domination was introduced by Sampathkumar and Walikar \cite{SaWa}, and has since been investigated extensively \cite{HedetniemiLaskar,CaroWestYuster,GuhaKhuller,HaynesHedetniemiSlater}. The maximum-leaf formulation has been studied from extremal and algorithmic viewpoints  \cite{Bonsma,GriggsKleitmanShastri,KleitmanWest,GriggsWu,DingJohnsonSeymour,GalbiatiMaffioliMorzenti,SolisOba,BonsmaZickfeld}.

For integers $n$ and $k$ with $n-1\ge k>1$, define $M(n,k)$ to be  the maximum of $\gamma_c(G)-\gamma(G)$ over all connected graphs $G$ with $n$ vertices and minimum degree at least $k$.

All logarithms in this paper are natural.

\begin{theorem}[Alon \cite{Alon2023}]\label{thm:alon}
For $n-1\ge k>1$, 
\[
2\left\lfloor \frac{n}{k+1}\right\rfloor-O(1)\le M(n,k)<\frac{n}{k+1}
\bigl(\log\lceil\log(k+1)\rceil+3\bigr).
\]
\end{theorem}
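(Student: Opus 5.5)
We prove the two inequalities separately: the lower bound by an explicit construction, the upper bound by a probabilistic argument combined with a connecting step.

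\textbf{Lower bound.} Set $t=\lfloor n/(k+1)\rfloor$. We build a path-like chain of $t$ blocks, each essentially a clique $K_{k+1}$, with consecutive blocks linked so that every block is still dominated by one vertex. For example, take cliques $B_1,\dots,B_t$, delete one edge $x_iy_i$ inside each $B_i$, and add the edges $y_ix_{i+1}$. All degrees stay at least $k$. The leftover $n-t(k+1)$ vertices are absorbed into the last block, which only costs $O(1)$. Now $\gamma(G)\le t+O(1)$, since one vertex per block (not $x_i$ or $y_i$) dominates that block. A connected dominating set, however, must contain a path through every inter-block link, and this forces about $3$ vertices per block, namely $x_i$, $y_i$ and a common neighbour. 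This gives $\gamma_c(G)\ge 3t-O(1)$, hence the difference is at least $2t-O(1)$. The careful part is the lower bound on $\gamma_c$. Each edge $y_ix_{i+1}$ is a bridge, so every connected dominating set containing vertices on both sides must contain $y_i$ and $x_{i+1}$. Within $B_i$, the vertices $x_i$ and $y_i$ are non-adjacent, so one more vertex is needed to connect them.

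\textbf{Upper bound.} We follow the standard Alon-style argument.
\begin{enumerate}[label=(\roman*)]
\item Let $D$ be a minimum dominating set, $|D|=\gamma$. Each vertex has closed neighbourhood of size at least $k+1$, so a random set $X$ with inclusion probability $p$, together with its uncovered vertices, shows $\gamma\le n(1+\log(k+1))/(k+1)$. This is only a sanity check.
\item The real task is to connect $D$ cheaply. Since $D$ dominates, any two components of $G[D]$ that are joined by a path of length at most $3$ can be merged by adding at most $2$ vertices. Repeating this yields a connected dominating set of size at most $\gamma+2(c-1)$, where $c$ is the number of components of $G[D]$. So we need $c\lesssim \tfrac12\cdot\frac{n}{k+1}(\log\lceil\log(k+1)\rceil+3)$.
\item A bound $\gamma\le n/(k+1)$ is false in general, so instead we choose a good dominating set directly. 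Take a maximal set $S$ of vertices with pairwise disjoint closed neighbourhoods, i.e.\ pairwise distance at least $3$. Then $|S|\le n/(k+1)$, and every vertex is within distance $2$ of $S$. Build a connected dominating set of the form $S$ plus connectors, grown in rounds, where each round greedily dominates the remaining vertices using neighbours of the current set. The $\log\log$ comes from the fact that the number of undominated vertices decays doubly exponentially once each new vertex covers a constant fraction of the remaining neighbourhood. With about $\log\log(k+1)$ rounds, each costing at most $n/(k+1)$, plus $O(n/(k+1))$ for connecting, we reach the claimed bound.
\end{enumerate}

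The main obstacle is step (iii): making rigorous the per-round accounting that yields $\log\lceil\log(k+1)\rceil$ rather than $\log(k+1)$, while comparing against $\gamma$ rather than against $n/(k+1)$. I would try to charge the extra connectors to vertices of an optimal dominating set, each of whose closed neighbourhoods has size at least $k+1$. I would then run the greedy round analysis on those neighbourhoods, so that the overhead per dominator is $O(1)$ plus $\log\log$ terms.
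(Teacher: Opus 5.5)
Your lower-bound construction is sound after one repair. As written, $x_1$ and $y_t$ end up with degree $k-1$, so keep $B_1$ and $B_t$ as full cliques (or close the chain into a cycle). You should also state explicitly that the vertices of $B_i\setminus\{x_i,y_i\}$ have no neighbours outside $B_i$. That is what forces a connected dominating set to meet every block, and hence to contain at least $3$ vertices of each interior block.

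The upper bound, however, has a genuine gap. Write $\nu=\frac{n}{k+1}$.
\begin{itemize}
\item Step (ii) alone gives only $\gamma_c-\gamma\le 2(c-1)$, and you have no bound on $c$ better than $c\le\gamma\le\nu(\log(k+1)+1)$. Merging components two at a time is far too slow.
\item Step (iii) cannot be repaired as described. A scheme that builds a connected dominating set of size $\nu(\log\log(k+1)+O(1))$ without reference to $\gamma$ would bound $\gamma_c$ itself. But $\gamma_c\ge\gamma$, and there are graphs of minimum degree $k$ with $\gamma$ of order $\nu\log k$; the bound in your step (i) is tight up to a constant factor.
\item There is no doubly exponential decay of undominated vertices. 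Greedy domination shrinks the undominated set only geometrically, which is exactly why $\gamma$ can be that large.
\item Charging $O(1)$ connectors to each vertex of an optimal dominating set costs $O(\gamma)$, which is again of order $\nu\log k$.
\end{itemize}

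The missing idea is the merging step the paper reproduces from Alon in the proof of Proposition~\ref{prop:upper}. Keep a minimum dominating set $D$ and add vertices one at a time, each merging \emph{many} components at once.
\begin{itemize}
\item If the current set has $x$ components, pick a representative in each. Their closed neighbourhoods have total size at least $(k+1)x$ inside $n$ vertices, so some vertex $w$ lies in at least $\lceil x/\nu\rceil$ of them.
\item This $w$ is not already in the set, since otherwise those components would be joined through $w$.
\item Adding $w$ gives $x'\le x-\lceil x/\nu\rceil+1$, that is, $x'-\nu\le(1-\frac{1}{\nu})(x-\nu)$.
\end{itemize}
Your step (i) is therefore not a sanity check but the essential input. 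It gives $x_0-\nu\le\gamma-\nu\le\nu\log(k+1)$. After about $\nu\log\log(k+1)$ additions, at most $2\nu$ components remain. About $\nu$ further additions, each still merging at least two components, bring this down to at most $\nu$. Your step (ii) then finishes with fewer than $2\nu$ vertices.

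The total is $\nu(\log\log(k+1)+3)$ up to rounding, which is Alon's bound. The $\log\log$ is just the logarithm of the ratio $\log(k+1)$ between the initial number of components and the target $\nu$, under geometric decay at rate $1-\frac{1}{\nu}$.
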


Alon concluded his paper with the following problem.

\begin{problem}[Alon~\cite{Alon2023}]\label{prob:alon}
Determine or estimate $M(n,k)$ for
$k\ge 3$.
\end{problem}

Alon~\cite{Alon2023} also remarked that 
it would be interesting to close the $\log\log(k+1)$ gap between the upper and lower
bounds for $M(n,k)$ and determine whether
$M(n,k)=\Theta\!\left(\frac{n}{k+1}\right)$.

The boundary case $n=k+1$ is immediate. Indeed, if
$|V(G)|=k+1$ and $\delta(G)\ge k$, then
$G=K_{k+1}$. Hence $\gamma(G)=\gamma_c(G)=1$, and therefore
$M(k+1,k)=0$. We henceforth assume that $n>k+1$.

For $x>1$, define
\[
\Phi(x)=\frac{1}{x\log\frac{x}{x-1}}.
\]

\begin{lemma}\label{lem:Phi}
The function $\Phi$ is strictly increasing on $(1,\infty)$, $0<\Phi(x)<1$, 
$\lim_{x\to 1^+}\Phi(x)=0$ and 
$\lim_{x\to\infty}\Phi(x)=1$. 
\end{lemma}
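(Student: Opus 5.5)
A substitution turns $\Phi$ into a function of one variable that is easy to analyse. Set $t=1/x\in(0,1)$. Then $\frac{x}{x-1}=\frac{1}{1-t}$, so $\log\frac{x}{x-1}=-\log(1-t)$, and
\[
\Phi(x)=\frac{t}{-\log(1-t)}=:g(t).
\]
As $x$ increases over $(1,\infty)$, $t$ decreases over $(0,1)$. Hence it suffices to prove four facts about $g$ on $(0,1)$: $g$ is strictly decreasing, $0<g<1$, $g(t)\to 1$ as $t\to0^+$, and $g(t)\to0$ as $t\to1^-$.

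For the bounds, note that $-\log(1-t)=\sum_{j\ge1}t^j/j$. For $0<t<1$ this sum is strictly greater than $t>0$, so $0<g(t)<1$. The limits follow from the same expansion. As $t\to0^+$ we have $-\log(1-t)=t+O(t^2)$, so $g(t)\to1$. As $t\to1^-$ the denominator $-\log(1-t)\to+\infty$ while the numerator stays at most $1$, so $g(t)\to0$. Translating back, $\Phi(x)\to1$ as $x\to\infty$ and $\Phi(x)\to0$ as $x\to1^+$.

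Monotonicity is the only step needing a small computation. I would show instead that $h(t)=1/g(t)=-\log(1-t)/t=\sum_{j\ge1}t^{j-1}/j$ is strictly increasing on $(0,1)$. This is immediate because it is a power series with positive coefficients and a nonconstant term $t/2$. Equivalently, one can differentiate: $h'(t)=\frac{t/(1-t)+\log(1-t)}{t^2}$, and $\log(1-t)>-t/(1-t)$ follows from $\log(1+u)<u$ with $u=t/(1-t)$. Since $h$ is strictly increasing and positive, $g$ is strictly decreasing, so $\Phi$ is strictly increasing in $x$.

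I do not expect any real obstacle. The only care needed is keeping the direction of monotonicity straight through the decreasing substitution $t=1/x$.
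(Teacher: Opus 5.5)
Your proof is correct. It differs from the paper's mainly in how the elementary facts about the logarithm are packaged.

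The paper stays in the variable $x$ and sets $g(x)=1/\Phi(x)=x\log\frac{x}{x-1}$. It uses three separate facts:
\begin{itemize}
\item $g'(x)=\log\frac{x}{x-1}-\frac{1}{x-1}<0$, from $\log(1+u)<u$, for monotonicity;
\item $g>1$, from $\log(1+u)>\frac{u}{1+u}$ with $u=\frac{1}{x-1}$;
\item the squeeze $1<g(x)<\frac{x}{x-1}$, for $\lim_{x\to\infty}\Phi(x)=1$.
\end{itemize}

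Your substitution $t=1/x$ turns the same reciprocal into the power series $h(t)=\sum_{j\ge1}t^{j-1}/j$ with positive coefficients. Monotonicity, the bound $h>1$, and both limits then come from this single expansion rather than from separate inequalities. The only extra cost is tracking the order-reversing substitution, which you handle correctly.

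Your fallback derivative check is exactly the paper's computation in disguise, since $u=\frac{t}{1-t}=\frac{1}{x-1}$.
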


\begin{proof}
Let 
$g(x)=\frac{1}{\Phi(x)}$. Then $g(x)=x\log \frac{x}{x-1}$.

As $\log(1+t)<t$ for $t>0$,
$
g'(x)=\log\frac{x}{x-1}-\frac{1}{x-1}<0$,
so $g$ is strictly decreasing and $\Phi(x)$ is strictly increasing.

Note that $\log(1+t)>\frac{t}{1+t}$ for $t>0$. Taking $t=\frac{1}{x-1}$ gives $g(x)>1$, so $0<\Phi(x)<1$.

As $x\to 1^+$, we have $\frac{x}{x-1}\to\infty$ and then
$g(x)\to\infty$.
Taking $t=\frac{1}{x-1}$ in the inequalities
$\frac{t}{1+t}<\log(1+t)<t$, we have 
$
1<g(x)
=x\log\left(1+\frac{1}{x-1}\right)
<\frac{x}{x-1}$.
As $x\to\infty$, $\frac{x}{x-1}\to1$, so  $g(x)\to 1$. Therefore,
$\lim_{x\to1^+}\Phi(x)=0$
and
$\lim_{x\to\infty}\Phi(x)=1$. 
\end{proof}

The main results are listed below.

\begin{theorem}\label{thm:full-range}
For every $\eta>0$, there exists an integer $K=K(\eta)$ such
that, for every integer $k\ge K$ and every integer $n>k+1$ with 
$\nu=\frac{n}{k+1}$,
\[
\left|
\frac{M(n,k)}
{\nu\log\log(k+1)}
-
\Phi(\nu)
\right|<\eta.
\]
\end{theorem}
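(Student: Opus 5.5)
We prove matching upper and lower bounds, each within $\eta\,\nu\log\log(k+1)$ for $k\ge K(\eta)$. Write $L=\log\log(k+1)$ throughout.

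\textbf{Upper bound.} We sharpen Alon's argument. Let $D$ be a minimum dominating set of $G$, so $|D|=\gamma(G)=:\gamma$ and $\gamma\le n$. The standard fact is that $D$ can be completed to a connected dominating set by adding at most $2(\gamma-1)$ vertices, one or two per merge of components of $G[D]$. That bound only gives $O(\nu)$ when $\gamma=O(\nu)$, so the loss must come from choosing the dominating set cleverly.

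The plan is to pick a random set $X$, each vertex included independently with probability $p=\log(\ldots)/(k+1)$. Then fix the undominated vertices $Y$, noting $\mathbb E|Y|\le n e^{-p(k+1)}$. Finally connect $X\cup Y$ using paths of length at most $3$, charging the cost to a greedy covering.

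The key quantitative input is a tradeoff. Any dominating set has size at least $\gamma$, and connecting costs about $2\gamma$. Alternatively, starting from the $\gamma$ vertices of $D$, we can use a greedy/random augmentation: at each round, add vertices that dominate the closed neighborhoods of many components, reducing the number of components geometrically.

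Optimizing over the number of rounds should give the following bound:
\[
\gamma_c-\gamma\le \min_{t}\Bigl(\text{cost of } t \text{ rounds}\Bigr)\approx \frac{L}{\log\frac{\nu}{\nu-1}}.
\]
Here $\log\frac{\nu}{\nu-1}$ appears as the per-round shrink rate. This rate comes from the fact that the closed neighborhoods of a minimum dominating set, each of size at least $k+1$, must fit into $n=\nu(k+1)$ vertices. The identity $\Phi(\nu)\nu L=L/\log\frac{\nu}{\nu-1}$ confirms that this is the right target.

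Concretely, I would run Alon's iterative argument, which uses $\lceil\log(k+1)\rceil$ levels. I would replace his crude per-level bound $1$ by a count of how many components can still have a private neighborhood of size $k+1$. Each level multiplies the component count by at most $1-\frac1\nu$. This gives $\log\log(k+1)$ levels, each costing about $\frac{1}{\log(\nu/(\nu-1))}$ in amortized sense.

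\textbf{Lower bound.} I would build a construction in the spirit of Alon's: a ``core'' set of about $\nu$ cliques of size about $k+1$, joined by a sparse random-like hypergraph structure. The structure should force any connected dominating set to pay about $L/\log\frac{\nu}{\nu-1}$ extra vertices, while $\gamma$ stays near $\nu$. A natural candidate is a layered construction with $L$ levels, where level $i$ uses sets of size $(k+1)^{e^{-i}}$, which produces $\log\log$. The probabilistic method (union bounds over small connectors) would then show that no cheap connector exists.

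\textbf{Main obstacle.} The hard step is matching the constant $\Phi(\nu)$ exactly on both sides, uniformly in $n$, especially in the regime $\nu\to1^+$ and for large $\nu$. For large $\nu$, $\Phi\to1$ and the answer is $\nu L$, which requires Alon's upper bound constant to be tightened from $\log\lceil\log(k+1)\rceil+3$ to $(1+o(1))L$. For $\nu$ near $1$ the answer is sublinear. To handle uniformity, I would split into the cases $\nu\le1+\epsilon$, $\nu\ge 1/\epsilon$, and a compact middle range. The middle range would be treated by continuity of $\Phi$ from Lemma~\ref{lem:Phi}, and the two extremes would be handled using $\Phi\to0$ and $\Phi\to1$ respectively.
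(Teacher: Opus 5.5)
\textbf{Upper bound.} Once you drop the side-thread about a random set $X$, your sketch is essentially the paper's Proposition~\ref{prop:upper}. With $x$ components, some vertex lies in at least $\lceil x/\nu\rceil$ closed neighbourhoods of component representatives. That vertex cannot already be in the set, and adding it gives $x'-\nu\le(1-\frac1\nu)(x-\nu)$. One stops at $\nu+1$ components and finishes with $2\nu$ vertices via Lemma~\ref{lem:connect-components}.

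Two things need to be made explicit. First, what contracts is $x-\nu$ per single added vertex, not $x$ per ``level''. Second, the $\log\log(k+1)$ appears only because the starting count is $x_0\le\gamma\le\nu(\log(k+1)+1)$ by Lemma~\ref{lem:domination-standard}; the bound $\gamma\le n$ that you wrote would give $\log n$.

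Also, no tightening of Alon's bound is needed for large $\nu$: $\log\lceil\log(k+1)\rceil+3=(1+o(1))\log\log(k+1)$ already, and the paper uses Theorem~\ref{thm:alon} directly once $\nu$ exceeds a large constant. Near $\nu=1$, the paper just pairs the refined upper bound with $M\ge 0$, since $\Phi(\nu)<\eta/2$ there.

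\textbf{Lower bound: this is where the genuine gap is.} Your design goal that ``$\gamma$ stays near $\nu$'' cannot work. By the completion lemma you quote yourself, $\gamma_c\le 3\gamma-2$, so $\gamma=O(\nu)$ forces $\gamma_c-\gamma=O(\nu)=o(\nu\log\log(k+1))$. The greedy argument above even shows that, for fixed $\nu$, a near-extremal graph must have $\log\gamma\ge(1-o(1))\log\log(k+1)$. So you need a minimum dominating set of polylogarithmic size whose components can be merged only a $\frac1\nu$-fraction at a time. Alon's construction gives only $2\lfloor\nu\rfloor-O(1)$, and your layered $(k+1)^{e^{-i}}$ idea supplies no mechanism that produces the constant $1/\log\frac{\nu}{\nu-1}$.

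The missing idea is the paper's construction:
\begin{enumerate}
\item Take a random set system on $N=\Theta(\log k)$ points with $N$ sets of density $p=a/c\approx 1/\nu$. Its covering number exceeds $(1-\delta)\log N/(-\log(1-p))$, and $\log N=\log\log(k+1)+O(1)$ (Lemma~\ref{lem:set-system}).
\item Turn each point $u$ into a satellite clique $C_u$ of order $\lceil\sqrt k\rceil$, and each set into many vertices of one large clique $A$.
\item Join $C_u$ to the vertices labelled by sets containing $u$ through a random bipartite graph (Lemma~\ref{lem:attachment}).
\end{enumerate}
Then one vertex per satellite plus one vertex of $A$ dominates, so $\gamma\le t+1=\Theta(\nu\log k)$. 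On the other hand, a connected dominating set $S$ must contain labels covering each block, since otherwise some $S\cap C_u$ is cut off from the rest of $S$. A satellite that $S$ skips must be dominated by $\Omega(\log k)$ vertices of $A_0$, so fewer than $\delta|S\cap A_0|$ satellites can be skipped (Proposition~\ref{prop:key-count}). For large $\nu$, one uses $R$ blocks of ratio $c\in[1+\sigma,W]$.

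Without a construction of this kind, together with the union-bound proof that no small connector exists, the inequality $M(n,k)\ge(\Phi(\nu)-\eta)\,\nu\log\log(k+1)$ for $\nu$ bounded away from $1$ remains unproved.
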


By Theorem \ref{thm:full-range}, 
 as $k\to\infty$, uniformly over all integers $n>k+1$,
\[
M(n,k)=
\bigl(\Phi(\nu)+o(1)\bigr)
\nu\log\log(k+1).
\]
This provides an asymptotic estimate for $M(n,k)$, and hence an asymptotic answer to
Problem~\ref{prob:alon}.

\begin{corollary}\label{cor:theta}
For every fixed $C>1$, there exists an integer
$k_0=k_0(C)$ such that, for all integers $k\ge k_0$ and
$n\ge C(k+1)$,
\[
\frac{\Phi(C)}{2}\cdot
\frac{n}{k+1}\log\log(k+1)
<
M(n,k)
<
\frac{3}{2}\cdot
\frac{n}{k+1}\log\log(k+1).
\]
\end{corollary}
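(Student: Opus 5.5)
We derive Corollary~\ref{cor:theta} directly from Theorem~\ref{thm:full-range} and Lemma~\ref{lem:Phi}, applying the theorem with a suitably small $\eta$.

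\textbf{Choice of $\eta$.} Fix $C>1$. Since $\Phi$ is strictly increasing with $0<\Phi<1$ (Lemma~\ref{lem:Phi}), we have $\Phi(C)>0$. Set
\[
\eta=\min\Bigl\{\tfrac{\Phi(C)}{2},\tfrac12\Bigr\}.
\]
Because $\Phi(C)<1$, this is simply $\eta=\Phi(C)/2$. Note that $\eta$ depends only on $C$. Let $K=K(\eta)$ be the integer provided by Theorem~\ref{thm:full-range}, and put $k_0=K$.

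\textbf{Applying the theorem.} Take any $k\ge k_0$ and any integer $n\ge C(k+1)$. Since $C>1$, we have $n>k+1$, so the theorem applies. Writing $\nu=n/(k+1)\ge C$ and $L=\nu\log\log(k+1)$, it gives
\[
(\Phi(\nu)-\eta)L<M(n,k)<(\Phi(\nu)+\eta)L.
\]
Strictly, multiplying through by $L$ requires $L>0$, i.e.\ $\log\log(k+1)>0$. This holds once $k+1>e$, so we enlarge $k_0$ to be at least $2$ if necessary.

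\textbf{Upper bound.} Lemma~\ref{lem:Phi} gives $\Phi(\nu)<1$, and $\eta\le \tfrac12$. Hence $\Phi(\nu)+\eta<\tfrac32$, so
\[
M(n,k)<\tfrac32\,\nu\log\log(k+1).
\]

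\textbf{Lower bound.} Since $\nu\ge C$ and $\Phi$ is increasing, $\Phi(\nu)\ge\Phi(C)$. Therefore
\[
\Phi(\nu)-\eta\ge \Phi(C)-\tfrac{\Phi(C)}{2}=\tfrac{\Phi(C)}{2}.
\]
Combined with the strict lower inequality from the theorem, this yields
\[
M(n,k)>\tfrac{\Phi(C)}{2}\,\nu\log\log(k+1),
\]
which is the claimed bound.

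There is no real obstacle. The only points needing care are that $\eta$ depends only on $C$, which it does, and that $\log\log(k+1)$ is positive for $k\ge k_0$.
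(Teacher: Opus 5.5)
Your proposal is correct and follows the paper's proof essentially verbatim: apply Theorem~\ref{thm:full-range} with $\eta=\Phi(C)/2$, then use $\Phi(\nu)\ge\Phi(C)$ for the lower bound and $\Phi(\nu),\Phi(C)<1$ for the upper bound. Your extra check that $\log\log(k+1)>0$ is harmless; the paper's $K$ already satisfies $K\ge 3$.
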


From Corollary~\ref{cor:theta},
$
M(n,k)=
\Theta\!\left(
\frac{n}{k+1}\log\log(k+1)
\right)$.

\begin{corollary}\label{cor:large-ratio}
If $k\to\infty$ and
$
\frac{n}{k+1}\to\infty,
$
then
$
M(n,k)\sim
\frac{n}{k+1}\log\log(k+1).
$
\end{corollary}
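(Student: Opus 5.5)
We derive Corollary~\ref{cor:large-ratio} directly from Theorem~\ref{thm:full-range} and Lemma~\ref{lem:Phi}; no new combinatorial input is required. Write $\nu=\frac{n}{k+1}$. The claim is that
\[
\frac{M(n,k)}{\nu\log\log(k+1)}\to 1
\]
along any sequence of pairs $(n,k)$ with $k\to\infty$ and $\nu\to\infty$. Fix $\varepsilon>0$. We must show that for all such pairs with $k$ and $\nu$ large enough, $\bigl|\frac{M(n,k)}{\nu\log\log(k+1)}-1\bigr|<\varepsilon$.

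The first step uses the behaviour of $\Phi$ at infinity. By Lemma~\ref{lem:Phi}, $\Phi$ is strictly increasing, satisfies $\Phi<1$, and $\Phi(x)\to1$ as $x\to\infty$. Hence there is $X=X(\varepsilon)$ with $0<1-\Phi(x)<\varepsilon/2$ for all $x\ge X$. Note that this step uses only the limit at infinity, not monotonicity.

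The second step applies Theorem~\ref{thm:full-range} with $\eta=\varepsilon/2$. This gives an integer $K=K(\varepsilon/2)$ such that every $k\ge K$ and every integer $n>k+1$ satisfy
\[
\left|\frac{M(n,k)}{\nu\log\log(k+1)}-\Phi(\nu)\right|<\frac{\varepsilon}{2}.
\]
The uniformity in $n$ matters here: $n$ varies with $k$ along the sequence, and the estimate must hold for all such $n$ at once. Now suppose $k\ge K$ and $\nu\ge X$. Since $\nu\ge X>1$, we have $n>k+1$, so the theorem applies. The triangle inequality then gives
\[
\left|\frac{M(n,k)}{\nu\log\log(k+1)}-1\right|<\frac{\varepsilon}{2}+\bigl(1-\Phi(\nu)\bigr)<\varepsilon.
\]
Because $k\to\infty$ and $\nu\to\infty$, both conditions $k\ge K$ and $\nu\ge X$ hold eventually. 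This proves $M(n,k)\sim\nu\log\log(k+1)$.

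There is no real obstacle in this argument. The only points to check are that Theorem~\ref{thm:full-range} is uniform in $n$, which it is by its statement, and that $\log\log(k+1)>0$ for large $k$, so the ratio is defined.
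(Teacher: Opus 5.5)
Your proposal is correct and follows essentially the same route as the paper: combine Theorem~\ref{thm:full-range} (uniform in $n$) with $\Phi(\nu)\to1$ from Lemma~\ref{lem:Phi}. Your explicit $\varepsilon$-argument (fixing $X(\varepsilon)$ and $K(\varepsilon/2)$) just writes out the paper's ``$\Phi(\nu)+o(1)\to1$'' step in full.
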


The proof  of Theorem \ref{thm:full-range} relies on the following theorem, which is of independent interest though it is covered by Theorem \ref{thm:full-range}.

\begin{theorem}\label{thm:main}
For every $\sigma>0$ and every $\varepsilon>0$, there exists an
integer $k_0=k_0(\sigma,\varepsilon)$ such that, for every
integer $k\ge k_0$ and every integer $n$ satisfying
$
\nu=\frac{n}{k+1}\ge1+\sigma,
$
we have
\[
-\varepsilon< \frac{M(n,k)}{\nu\log\log(k+1)}-\Phi(\nu)<
\varepsilon.
\]
\end{theorem}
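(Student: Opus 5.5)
The statement asks for a matching upper and lower bound on $M(n,k)$, so the plan splits into two halves. First I will rewrite the target in a simpler form. Since $\nu\Phi(\nu)=1/\log\frac{\nu}{\nu-1}$, the claim is
\[
M(n,k)=\bigl(1\pm\varepsilon'\bigr)\frac{\log\log(k+1)}{\log\frac{\nu}{\nu-1}}
\]
for $\nu\ge1+\sigma$. On this range $\log\frac{\nu}{\nu-1}\le\log\frac{1+\sigma}{\sigma}$, so additive errors of order $\nu$ are harmless once $k$ is large. This form suggests what to aim for in both halves: a process of about $T=\log\log(k+1)/\log\frac{\nu}{\nu-1}$ rounds, each costing one vertex. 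In each round some "defect" quantity, initially of size about $\log(k+1)$, shrinks by a factor $(\nu-1)/\nu$. The process ends when the defect drops to $O(1)$, which happens after exactly $T$ rounds since $(1-1/\nu)^T\log(k+1)\approx1$.

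\emph{Upper bound.} Let $G$ be connected with $n$ vertices and $\delta(G)\ge k$, and let $D$ be a minimum dominating set. I will build a connected dominating set of size at most $\gamma(G)+(1+\varepsilon)T+O(\nu)$, following Alon's greedy framework with a sharper analysis. The basic step is as follows. If $D'\supseteq D$ is dominating and $G[D']$ has $c\ge2$ components, then some component is within distance $3$ of another, so adding at most two vertices reduces $c$. The cheaper move is to add a single vertex $x\notin D'$ that is adjacent to many components. The key counting uses $\delta\ge k$: every closed neighbourhood has $\ge k+1$ vertices inside an $n$-vertex graph, so two "blocks" of size $k+1$ can be disjoint only about $\nu$ times. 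From this I want an averaging lemma of the following form. If the current configuration carries a potential $\Psi$ (something like $\log$ of the number of components per block, starting at $\le\log(k+1)+O(1)$ because $\gamma\le\nu(1+\log(k+1))$), then one added vertex multiplies $\Psi$ by at most $1-1/\nu+o(1)$. Iterating this, and finishing with Alon's cruder $O(\nu)$-cost cleanup once $\Psi=O(1)$, gives the upper bound.

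\emph{Lower bound.} Here I need, for each large $k$ and each $n$ with $\nu\ge1+\sigma$, a graph with $\gamma_c-\gamma\ge(1-\varepsilon)T$. I would use a hierarchical, "blown-up" construction with $T$ levels, built from cliques of order about $k+1$ and random bipartite attachments. The graph should have a dominating set consisting of roughly $\nu\log(k+1)$ vertices that is "spread out" (this is where the $\log(k+1)$ from random domination enters). The attachments should be chosen by the probabilistic method so that any connected set containing a dominating set must climb through all $T$ levels. At level $i$ one clique-block of size $\approx(k+1)$ out of about $\nu$ must be "paid for", and only a $(\nu-1)/\nu$ fraction of the remaining demand can be absorbed per level. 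Filling out to exactly $n$ vertices while keeping $\delta\ge k$ uses padding cliques, which costs only $O(1)$ in the difference.

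\emph{Main obstacle.} I expect the hardest step to be the upper-bound averaging lemma: identifying the right potential and proving the exact factor $1-1/\nu$, rather than a cruder constant as in Alon's $\log\lceil\log(k+1)\rceil+3$ bound. The lower-bound construction must match that constant exactly, so I would design the potential first and then reverse-engineer the construction to make each inequality in the lemma nearly tight. Throughout I would carry explicit error terms, so that uniformity in $\nu\ge1+\sigma$ yields a single $k_0(\sigma,\varepsilon)$.
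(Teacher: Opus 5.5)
Your upper-bound half uses the same greedy merging scheme as the paper. The step you flag as the main obstacle is actually a two-line averaging argument. Choose one representative in each of the $x$ components of the current dominating set. Their closed neighbourhoods have total size at least $(k+1)x$, so some vertex $w$ lies in at least $\lceil x/\nu\rceil$ of them. $w$ cannot already be in the set, since it would join two distinct components. Adding it gives $x'\le x-\lceil x/\nu\rceil+1$, i.e.\ $x'-\nu\le(1-\frac{1}{\nu})(x-\nu)$.

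So the right potential is the number of components per block itself (equivalently $x-\nu$), not its logarithm, which would start near $\log\log(k+1)$ rather than $\log(k+1)$. Also, a contraction factor $1-\frac{1}{\nu}+o(1)$ with an absolute $o(1)$ would be useless for large $\nu$, whereas the shifted recurrence is exact. Start from $x_0-\nu\le\nu\log(k+1)$, stop once $x\le2\nu$, and then link components with at most $4\nu$ further vertices. This gives $\gamma_c-\gamma\le\nu\Phi(\nu)\log\log(k+1)+4\nu+1$ uniformly in $\nu$. That is in fact slightly cleaner than the paper: it runs the process down to $x\le\nu+1$, picks up an extra $\log\nu/\log\frac{\nu}{\nu-1}$, and so has to fall back on Alon's bound when $\nu$ is large.

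The genuine gap is the lower bound. You give no construction, and the sketch cannot be read literally: $T/\nu=\Phi(\nu)\log\log(k+1)\to\infty$, so the $T$ levels cannot each pay for one of the roughly $\nu$ disjoint blocks of size $k+1$. What is missing is a gadget realizing your ``defect'' heuristic. In the paper it is a flat random set-cover instance, not a hierarchy:
\begin{itemize}
\item Take a random set system $(U;F_1,\ldots,F_N)$ with $|U|=N$ and density $p\approx 1/c$, where $c=n/(Rk)\in[1+\sigma,W]$ and $R$ disjoint copies are used when $\nu$ is large. All degrees and set sizes lie within a factor $1\pm\delta$ of $D=pN$, and the covering number exceeds $(1-\delta)\log N/(-\log(1-p))$.
\item Blow each $F_j$ up into $L\approx k/D$ vertices of a central clique $A$, padded to $n$ vertices in total.
\item Replace each $u\in U$ by a satellite clique $C_u$ of size $\lceil\sqrt{k}\,\rceil$. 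Join it by a random bipartite graph of density $1-\delta$ to the copies of sets containing $u$, which gives minimum degree $k$.
\end{itemize}
Then $\gamma\le RN+1$. Every connected dominating set must contain copies whose labels cover each $U_i$, since otherwise some satellite is undominated or cut off. This costs at least $R\cdot\cov(\cF)$ vertices of $A$. Heuristically, the defect is the number of satellites not yet connected, which shrinks by the factor $1-p$ per chosen set.

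The delicate point, which your plan does not anticipate, is why one must take $N=\Theta(\log k)$; this is exactly what makes $\log N\approx\log\log k$. A connected dominating set might save vertices by skipping satellites and dominating them from $A$. This is ruled out only because the random attachment forces $\Omega(\log k)$ vertices of $B_u$ to dominate $C_u$. Meanwhile each label set has at most $(1+\delta)D$ elements, with $D\approx\beta\log k$ and $\beta$ small. Double counting then shows that fewer than $\delta q$ satellites are skipped, where $q=|S\cap A_0|$. Without such a construction and the two covering lower bounds (Lemmas~\ref{lem:set-system} and~\ref{lem:attachment}), the lower half of the theorem remains unproved.
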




%
%
%

\section{Proof of the upper bound in Theorem \ref{thm:main}}

We use the following classical domination estimate. It follows from the standard set-cover bound, see \cite{Lovasz,AlonSpencer}.

\begin{lemma}\label{lem:domination-standard}
Every $n$-vertex graph of minimum degree at least $k$ satisfies
\[
\gamma(G)\le\frac{n}{k+1}\bigl(\log(k+1)+1\bigr).
\]
\end{lemma}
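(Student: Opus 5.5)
We prove Lemma~\ref{lem:domination-standard} by the standard probabilistic (alteration) argument, with one optimized parameter. Let $G$ have $n$ vertices and $\delta(G)\ge k$. Fix $p\in[0,1]$, to be chosen later. Put each vertex into a random set $X$ independently with probability $p$. Let $Y$ be the set of vertices $v\notin X$ with $N_G(v)\cap X=\emptyset$, that is, the vertices with $N_G[v]\cap X=\emptyset$. Then $X\cup Y$ is a dominating set, since every vertex outside $X\cup Y$ has a neighbor in $X$. Hence $\gamma(G)\le \mathbb{E}|X|+\mathbb{E}|Y|$.

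Next we bound the two expectations. Clearly $\mathbb{E}|X|=np$. For each $v$, we have $|N_G[v]|\ge k+1$, so
\[
\Pr(v\in Y)=(1-p)^{|N_G[v]|}\le (1-p)^{k+1}\le e^{-p(k+1)}.
\]
Therefore
\[
\gamma(G)\le np+ne^{-p(k+1)}.
\]

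Now choose $p=\frac{\log(k+1)}{k+1}$. This lies in $[0,1]$ because $\log x\le x$ for $x\ge1$. With this choice, $np=\frac{n}{k+1}\log(k+1)$ and $ne^{-p(k+1)}=\frac{n}{k+1}$. Adding the two terms gives
\[
\gamma(G)\le \frac{n}{k+1}\bigl(\log(k+1)+1\bigr).
\]
Since some outcome of the random choice achieves at most the expected size, a dominating set of this size exists. This proves the lemma.

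There is no real obstacle here. The only point needing care is the bound $\Pr(v\in Y)\le(1-p)^{k+1}$: it uses the closed neighborhood, whose size is at least $k+1$, and this is what yields the denominator $k+1$ rather than $k$.

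An alternative route is the greedy set-cover bound of Lov\'asz applied to the hypergraph of closed neighborhoods. Every edge of that hypergraph has at least $k+1$ vertices, so the fractional cover number is at most $\frac{n}{k+1}$. The greedy algorithm gives a factor of $1+\log\Delta$, however, and converting this into the stated form needs a slightly different count. For that reason I would present the probabilistic argument above.
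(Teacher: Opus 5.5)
Your proof is correct, and it is essentially the paper's route: the paper proves nothing here beyond citing the standard bound in Lov\'asz and Alon--Spencer, and your random-set-plus-alteration argument with $p=\frac{\log(k+1)}{k+1}$ is exactly the Alon--Spencer proof of that bound. The details you flag all check out: the closed neighborhood has size at least $k+1$, the choice $p\le 1$ is valid, and you correctly note that Lov\'asz's greedy bound applied directly would give $\Delta$ rather than $k$.
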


Alon established an upper bound for $M(n,k)$ in \cite[Lemma 3.1]{Alon2023}. We need the following upper bound for $M(n,k)$.

\begin{proposition}\label{prop:upper}
Let $k\ge3$, let $n>k+1$, and 
let $\nu=\frac{n}{k+1}>1$.
Then
\[
M(n,k)\le
\frac{\log\bigl(\nu\log(k+1)\bigr)}
{\log \frac{\nu}{\nu-1}}
+2\nu+1.
\]
\end{proposition}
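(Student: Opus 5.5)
Fix a connected $n$-vertex graph $G$ with $\delta(G)\ge k$ and a minimum dominating set $D$, $|D|=\gamma$. The plan is to grow $D$ into a connected dominating set, adding few vertices each time we merge components. The standard observation (as in Alon's argument) is this. Contract the components of $G[D]$. Since $D$ dominates and $G$ is connected, any two components can be joined by a path with at most two internal vertices outside $D$. Hence a connected dominating set of size at most $\gamma+2(c-1)$ exists, where $c$ is the number of components of $G[D]$. This crude bound is too weak, so the plan has two phases.

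\textbf{Phase 1 (greedy merging).} While there are many components, we choose a single vertex $v\notin D$ that is adjacent to many components simultaneously. Adding $v$ reduces the component count by $(\#\text{components touched by }v)-1$. We then quantify progress. Let $t$ denote the current number of components (classes of a partition of $D$ after adding vertices). The key counting claim is that, since each vertex of $V\setminus D'$ has degree at least $k$ and $|V|=n=\nu(k+1)$, some vertex (or the best choice) reduces $t$ by a factor of roughly $(\nu-1)/\nu$ per added vertex. The heuristic: each class $C$ together with its dominated neighbourhood covers $N[C]$, and every vertex $u\in C$ has $|N[u]|\ge k+1$. So the closed neighbourhoods of components are large, about a $1/\nu$ fraction of $V$ each. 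Averaging over vertices $w$, the number of classes whose closed neighbourhood contains $w$ is at least $t(k+1)/n=t/\nu$. Thus some vertex $w$ touches at least $t/\nu$ classes, and adding $w$ (and using its neighbours in classes) merges them. This gives $t\mapsto t-\lceil t/\nu\rceil+1$, i.e.\ roughly $t\mapsto t(1-1/\nu)+1$. Care is needed because $w$ may itself lie in a class or be adjacent to it; either way, one added vertex merges all classes whose closed neighbourhood contains $w$. Iterating this, starting from $t_0\le\gamma\le\nu(\log(k+1)+1)$ by Lemma~\ref{lem:domination-standard}, after $s$ steps we get $t_s\le t_0(1-1/\nu)^s+\nu$. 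Choosing $s=\lceil\log(t_0)/\log\frac{\nu}{\nu-1}\rceil$ gives $t_s\le \nu+1$ or so. Since $t_0\le \nu\log(k+1)+\nu$, we obtain $\log t_0\le\log(\nu\log(k+1))+O(1/\log(k+1))$. The additive slack must be absorbed into the constant terms, or handled by bounding $t_0\le\gamma$ more carefully. If necessary, I would stop the iteration once $t\le\nu$, which happens within $\log(\nu\log(k+1))/\log\frac{\nu}{\nu-1}+1$ steps.

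\textbf{Phase 2 (finishing).} With at most about $\nu+1$ classes left, we connect them using the crude bound: at most $2$ extra vertices per merge, giving at most $2\nu$ further vertices. The total number of added vertices is then at most $\frac{\log(\nu\log(k+1))}{\log\frac{\nu}{\nu-1}}+2\nu+1$, which is the claim.

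\textbf{Main obstacle.} The main obstacle is making the averaging step rigorous with the exact constants. Each class's closed neighbourhood has size at least $k+1$, so the number of classes containing a given $w$ in their closed neighbourhood, summed over $w$, is at least $t(k+1)$. This yields some $w$ with multiplicity at least $\lceil t/\nu\rceil$, and hence reduction $t\to t-\lceil t/\nu\rceil+1$. Combining the recursion with the $+1$ terms and the rounding to land exactly on the stated $\frac{\log(\nu\log(k+1))}{\log(\nu/(\nu-1))}+2\nu+1$ requires bookkeeping. I would use $t_{s+1}-\nu\le(1-1/\nu)(t_s-\nu)$, stop when $t_s-\nu<1$, and use $t_0-\nu\le\nu\log(k+1)$, which follows from Lemma~\ref{lem:domination-standard}.
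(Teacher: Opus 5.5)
Your proposal is correct and follows essentially the same route as the paper: averaging closed neighbourhoods to find a vertex lying in at least $\lceil t/\nu\rceil$ of them (at most one new vertex per step), the translated recurrence $t'-\nu\le(1-1/\nu)(t-\nu)$ with $t_0-\nu\le\nu\log(k+1)$, stopping once fewer than $\nu+1$ components remain, and then finishing with Lemma~\ref{lem:connect-components}. One wording fix: it is not true that \emph{any} two components of $G[D]$ are joined by a path with at most two internal vertices; the correct statement, which Lemma~\ref{lem:connect-components} relies on, is that at each stage \emph{some} pair of components is joined by such a path.
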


To prove this result, we borrow the techniques from the proof of \cite[Lemma 3.1]{Alon2023}, and particularly, we need a lemma, which is a standard observation,  see, e.g., the proof of \cite[Lemma 3.1]{Alon2023}.


\begin{lemma}\label{lem:connect-components}
Let $G$ be a connected graph, and let $S$ be a dominating set such that $G[S]$ has $x$ connected components. Then $S$ can be enlarged to a connected dominating set by adding at most $2(x-1)$ vertices.
\end{lemma}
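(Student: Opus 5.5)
We prove the statement by induction on the number $x$ of components of $G[S]$, adding at most two vertices at each step to merge two components. If $x=1$ there is nothing to do. For $x\ge2$, let $C_1,\dots,C_x$ be the vertex sets of the components of $G[S]$. Since $G$ is connected, there is a path in $G$ from $C_1$ to $V(G)\setminus C_1$. We take a shortest path $P$ in $G$ between $C_1$ and $S\setminus C_1$, with endpoints $u\in C_1$ and $w\in C_j$ for some $j\ne1$. The aim is to show that $P$ has at most two internal vertices, i.e.\ length at most $3$.

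The key step uses domination. Suppose $P=u=p_0,p_1,\dots,p_\ell=w$ with $\ell\ge4$. By minimality, no internal vertex $p_i$ ($1\le i\le \ell-1$) lies in $S$. The vertex $p_2$ is not in $S$, so it has a neighbor $s\in S$. If $s\in C_1$, then $s,p_2,\dots,p_\ell$ is a shorter path from $C_1$ to $S\setminus C_1$, a contradiction. If $s\in S\setminus C_1$, then $u,p_1,p_2,s$ has length $3<\ell$, again a contradiction. Hence $\ell\le3$, and $P$ has at most two internal vertices, none of them in $S$.

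Now set $S'=S\cup V(P)$. Then $S'\supseteq S$ is still dominating, and $G[S']$ has at most $x-1$ components, because $C_1$ and $C_j$ are joined through $P$ and the new vertices attach to these two components, so no new components appear. Also $|S'|\le|S|+2$. Applying induction to $S'$, whose induced subgraph has $x'\le x-1$ components, we obtain a connected dominating set containing $S'$ using at most $2(x'-1)\le 2(x-2)$ further vertices. The total number of vertices added is therefore at most $2+2(x-2)=2(x-1)$.

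The only real obstacle is the length-$3$ claim above. It follows from the domination of the middle vertex $p_2$ together with minimality of $P$, so the argument is short.
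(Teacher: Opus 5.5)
Your proof is correct and is essentially the standard argument the paper relies on; the paper gives no proof of its own and points to the proof of Alon's Lemma~3.1. The idea there is the same as yours: since $S$ is dominating and $G$ is connected, a shortest path from one component of $G[S]$ to the rest of $S$ has at most two internal vertices (your domination argument at $p_2$), so adding them lowers the component count by at least one, and at most $x-1$ such steps, each costing at most $2$ vertices, suffice.
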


%
%

\begin{proof}[Proof of Proposition \ref{prop:upper}]
Let $G$ be a connected $n$-vertex graph with $\delta(G)\ge k$. Let $S$ be a minimum dominating set, and let $x_0$ be the number of components of $G[S]$. Lemma~\ref{lem:domination-standard} gives
\[
x_0\le |S|=\gamma(G)
\le\nu\bigl(\log(k+1)+1\bigr).
\]

Starting from $S$, add vertices when the current dominating set induces more than $\nu+1$ components. Suppose the current number of components is $x>\nu+1$. Choose one representative vertex from each component. The sum of the sizes of the closed neighborhoods of these $x$ representatives is at least $(k+1)x$. Counting the pairs
$(u,\text{representative }v)$ with $u\in N_G[v]$
and averaging over the $n$ possible vertices $u$, we find a vertex $w$ belonging to at least
$
\left\lceil \frac{(k+1)x}{n} \right\rceil
=\left\lceil \frac{x}{\nu} \right\rceil
$
of the chosen closed neighborhoods.
Because $x>\nu+1$, this number is at least two. 

If $w$ were already in the current dominating set, then any two
representatives whose closed neighborhoods contain $w$ would be
connected through $w$ in the subgraph induced by that set. Since the
representatives belong to distinct components, this is impossible.
Thus $w$ is not already in the current dominating set.
Adding $w$ therefore merges at least $\lceil \frac{x}{\nu}\rceil$ old components into one. If $x'$ denotes the new number of components, then
\[
x'\le x-\left\lceil\frac{x}{\nu}\right\rceil+1
\le\left(1-\frac{1}{\nu}\right)x+1.
\]
Subtracting $\nu$ gives the recurrence
\begin{equation}\label{eq:translated-recurrence}
x'-\nu\le
\left(1-\frac{1}{\nu}\right)(x-\nu).
\end{equation}

Let $x_i$ be the number of components after $i$ such additions, and
write $y_i=x_i-\nu$. 
If the procedure performs at least $i$ additions, then
\eqref{eq:translated-recurrence} applies successively for
$j=0,\ldots,i-1$, and hence
\[
y_i\le
\left(1-\frac{1}{\nu}\right)^i y_0.
\]
Let
$
\lambda_\nu
=-\log\left(1-\frac{1}{\nu}\right)
=\log\frac{\nu}{\nu-1}>0
$
and let
$
t_0=
\left\lceil
\frac{\log\max\{x_0-\nu,1\}}{\lambda_\nu}
\right\rceil.
$
If $x_0\le\nu+1$, then $t_0=0$, and no addition is needed.
Suppose, therefore, that $x_0>\nu+1$. If the procedure terminates
before $t_0$ additions, then the current dominating set already
induces at most $\nu+1$ components. Otherwise, the procedure can be
performed $t_0$ times, and
\[
x_{t_0}-\nu
\le
\left(1-\frac{1}{\nu}\right)^{t_0}(x_0-\nu)
=
e^{-\lambda_\nu t_0}(x_0-\nu)
\le 1,
\]
where the last inequality follows from the definition of $t_0$.
Consequently, $x_{t_0}\le\nu+1$. Thus, after at most $t_0$
additions, the current dominating set induces at most $\nu+1$
components.

By Lemma~\ref{lem:connect-components}, it can then be extended to a
connected dominating set by adding at most
$2\nu$
further vertices. Therefore,
$
\gamma_c(G)-\gamma(G)\le t_0+2\nu.
$

Finally, since
$
x_0-\nu\le\nu\log(k+1)
$
and $\nu\log(k+1)>1$ for $k\ge3$, we have
$
\max\{x_0-\nu,1\}\le\nu\log(k+1).
$
It follows that
\[
t_0
\le
\frac{\log\bigl(\nu\log(k+1)\bigr)}{\lambda_\nu}+1.
\]
Since $G$ was arbitrary, taking the maximum over all such graphs gives
\[
M(n,k)
\le
\frac{\log\bigl(\nu\log(k+1)\bigr)}
{\log \frac{\nu}{\nu-1}}
+2\nu+1.
\qedhere
\]
\end{proof}


\begin{proof}[Proof of the upper bound in Theorem \ref{thm:main}]
Let $L_k=\log\log(k+1)$. Choose a constant $A>1+\sigma$ so large that
$1-\Phi(A)<\frac{\varepsilon}{4}$. 

First suppose $\nu\le A$. Proposition~\ref{prop:upper} gives
\[
\frac{M(n,k)}{\nu L_k}
\le
\Phi(\nu)
+\frac{\Phi(\nu)\log\nu+2+\frac{1}{\nu}}{L_k}.
\]
Because $0<\Phi(\nu)<1$ and $\nu\le A$, the second term is at most
\[
\frac{\log A+2+\frac{1}{1+\sigma}}{L_k},
\]
which is smaller than $\varepsilon$ when $k$ is large enough, where the lower bound on $k$ depends only on $\sigma$ and $\varepsilon$. That is,  $\frac{M(n,k)}{\nu\log\log(k+1)}-\Phi(\nu)<
\varepsilon$.

Now suppose $\nu>A$. Theorem~\ref{thm:alon} gives
\[
\frac{M(n,k)}{\nu L_k}
<
\frac{\log\lceil\log(k+1)\rceil+3}{L_k}.
\]
The last ratio is at most $1+\frac{\varepsilon}{4}$ for all sufficiently large $k$. Since $\Phi$ is increasing,
$\Phi(\nu)\ge\Phi(A)>1-\frac{\varepsilon}{4}$. Thus 
$\frac{M(n,k)}{\nu\log\log(k+1)}-\Phi(\nu)<1+\frac{\varepsilon}{4}-(1-\frac{\varepsilon}{4})<
\varepsilon$, as desired.
\end{proof}

\section{Two probabilistic ingredients}

For an integer $N\ge0$ and a real number $p\in[0,1]$, the notation $X\sim\Bin(N,p)$ means that $X$ has the binomial distribution with parameters $N$ and $p$. Equivalently, $X$ counts the number of successes in $N$ independent Bernoulli trials, each having success probability $p$. Thus
$\mathbb EX=Np$. 
We use the following standard Chernoff estimates; see, for example, \cite{AlonSpencer,JansonLuczakRucinski}.

\begin{lemma}[Chernoff bounds]\label{lem:chernoff}
Let $X\sim\Bin(N,p)$ and let $\mu=Np$. For $0<\eta<1$,
\[
\Pr\bigl(|X-\mu|>\eta\mu\bigr)
\le2\exp\left(-\frac{\eta^2\mu}{3}\right).
\]
Also, for $0<\theta<1$,
\[
\Pr\bigl(X\le(1-\theta)\mu\bigr)
\le\exp\left(-\frac{\theta^2\mu}{2}\right).
\]
\end{lemma}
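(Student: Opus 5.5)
The statement to prove is Lemma~\ref{lem:chernoff}. Both inequalities are standard. I would derive them by the exponential moment (Bernstein--Chernoff) method, using only the independence of the $N$ Bernoulli summands.

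\emph{Moment generating function.} Write $X=\sum_{i=1}^N X_i$ with the $X_i$ independent Bernoulli$(p)$. For every real $t$, independence and $1+y\le e^y$ give
\[
\mathbb E e^{tX}=\bigl(1+p(e^t-1)\bigr)^N\le \exp\bigl(\mu(e^t-1)\bigr).
\]

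\emph{Upper tail.} By Markov's inequality applied to $e^{tX}$ with $t>0$,
\[
\Pr\bigl(X\ge(1+\eta)\mu\bigr)\le \exp\bigl(\mu(e^t-1)-t(1+\eta)\mu\bigr).
\]
Choosing $t=\log(1+\eta)$ yields the bound $\bigl(e^{\eta}/(1+\eta)^{1+\eta}\bigr)^{\mu}$. It then suffices to check the elementary inequality
\[
\eta-(1+\eta)\log(1+\eta)\le-\frac{\eta^2}{3}\qquad (0<\eta<1).
\]
To see it, let $f(\eta)$ be the left side minus the right side. Then $f(0)=0$ and
\[
f'(\eta)=-\log(1+\eta)+\frac{2\eta}{3}.
\]
This derivative is concave, vanishes at $0$, and at $\eta=1$ equals $2/3-\log 2<0$. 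Hence $f'\le0$ on $[0,1]$, so $f\le0$ there.

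\emph{Lower tail.} Apply Markov's inequality to $e^{-tX}$ and choose $t=-\log(1-\theta)$. This gives
\[
\Pr\bigl(X\le(1-\theta)\mu\bigr)\le\bigl(e^{-\theta}(1-\theta)^{-(1-\theta)}\bigr)^{\mu}.
\]
It remains to check $-\theta-(1-\theta)\log(1-\theta)\le-\theta^2/2$. This follows from the series expansion
\[
(1-\theta)\log(1-\theta)=-\theta+\sum_{j\ge2}\frac{\theta^j}{j(j-1)}\ge-\theta+\frac{\theta^2}{2}.
\]
Since $\theta^2/3\le\theta^2/2$, taking $\theta=\eta$ also gives $\Pr\bigl(X<(1-\eta)\mu\bigr)\le e^{-\eta^2\mu/3}$.

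\emph{Two-sided bound.} A union bound over the two tails gives the factor $2$ in the first inequality, and the one-sided lower-tail bound is the second inequality.

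\emph{Degenerate cases.} If $\mu=0$, then $X=0$ almost surely and the events have probability $0$. The case $\theta$ near $1$ is covered by the computation above, which holds for all $0<\theta<1$.

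The only step with any real content is the elementary inequality for the upper tail on $(0,1)$, which needs the restriction $\eta<1$; everything else is routine.
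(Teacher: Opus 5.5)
The paper gives no proof of this lemma. It quotes both estimates as standard and refers to the books of Alon--Spencer and Janson--{\L}uczak--Ruci\'nski. Your exponential-moment argument is the standard derivation behind those references, and the computations are correct:
\begin{itemize}
\item $\mathbb E e^{tX}\le\exp(\mu(e^t-1))$ for all real $t$;
\item the choices $t=\log(1+\eta)$ and $t=-\log(1-\theta)$;
\item the series identity $(1-\theta)\log(1-\theta)=-\theta+\sum_{j\ge2}\theta^j/(j(j-1))$;
\item the union bound that produces the factor $2$.
\end{itemize}
Compared with the citation, your version makes the lemma self-contained and shows where the constants $\frac13$ and $\frac12$ come from.

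There is one incorrect justification, and it is in the only nontrivial step. The function $f'(\eta)=\frac{2\eta}{3}-\log(1+\eta)$ is convex, not concave, since $f'''(\eta)=(1+\eta)^{-2}>0$. This is more than a wording issue, because your inference needs convexity. A convex $f'$ with $f'(0)=0$ and $f'(1)<0$ satisfies $f'(\eta)\le(1-\eta)f'(0)+\eta f'(1)=\eta f'(1)<0$ on $(0,1]$. Concavity would give only the reverse inequality $f'(\eta)\ge\eta f'(1)$, which says nothing about the sign of $f'$.

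To fix it, replace ``concave'' by ``convex''. Alternatively, argue directly: $f''(\eta)=\frac23-\frac{1}{1+\eta}$ is negative on $[0,\frac12)$ and positive on $(\frac12,1]$. So $f'$ decreases from $0$ and then increases to $f'(1)=\frac23-\log 2<0$, giving $f'\le\max\{f'(0),f'(1)\}=0$ on $[0,1]$. With either fix, the upper tail is proved.

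There is also a smaller slip. When $\mu=0$, the lower-tail event $\{X\le(1-\theta)\mu\}=\{X\le0\}$ has probability $1$, not $0$. The inequality still holds because the right-hand side is $e^{0}=1$. Your Markov computation never uses $\mu>0$, so no separate degenerate case is needed.
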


For a positive integer \(N\), write \([N]=\{1,\ldots,N\}\).
An indexed set system
$\cF=(U;F_1,\ldots,F_N)$
consists of a ground set \(U\) and subsets \(F_j\subseteq U\), indexed
by \(j\in[N]\). If \(F_i=F_j\) with \(i\neq j\),
then the two occurrences are regarded as distinct indexed members.
Denote by
\[
d_{\cF}(u)=|\{j:u\in F_j\}|,
\qquad
\cov(\cF)=
\min\left\{
|J|:
J\subseteq[N],\
U=\bigcup_{j\in J}F_j
\right\},
\]
with the convention that \(\cov(\cF)=\infty\) if no such
\(J\) exists.

The covering-number estimate in the next lemma is closely related to a result of Vercellis~\cite[Theorem~3.1]{Vercellis}, which implies that for every fixed \(p\in(0,1)\), if \(\cF=(U;F_1,\ldots,F_N)\) has \(|U|=N\) and, independently for all \((u,j)\in U\times[N]\), the element \(u\) is included in \(F_j\) with probability \(p\), then, almost surely,
$
\cov(\cF)\sim
\frac{\log N}{-\log(1-p)}
$
as \(N\to\infty\).
Since our application requires a version with additional properties, we give a complete proof.

\begin{lemma}\label{lem:set-system}
Fix constants
$0<p_-<p_+<1,$ and 
$0<\eta<1$. 
There exists $N_0=N_0(p_-,\eta)$ such that, for every integer $N\ge N_0$ and every $p\in[p_-,p_+]$, there is an indexed set system
$\cF=(U;F_1,\ldots,F_N),$ with
$|U|=N,$ $D=pN$ and the following properties:
\begin{enumerate}[label=\textup{(\roman*)},leftmargin=2.2em]
\item $|F_j|\le(1+\eta)D$ for every $j$;
\item $(1-\eta)D\le d_{\cF}(u)\le(1+\eta)D$ for every $u\in U$;
\item
$
\cov(\cF)>
\frac{(1-\eta)\log N}{-\log(1-p)}.
$
\end{enumerate}
\end{lemma}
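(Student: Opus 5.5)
We take the random model: $U=[N]$, and independently for each pair $(u,j)\in U\times[N]$ we put $u$ into $F_j$ with probability $p$. We then show that, for $N\ge N_0(p_-,\eta)$ and every $p\in[p_-,p_+]$, each of (i)--(iii) fails with probability less than $1/3$. A suitable realization then exists. The bounds must be uniform in $p$, so we only ever use $p\ge p_-$ in exponents, and use $p\le p_+<1$ only where it is needed for finiteness.

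For (i) and (ii), note that $|F_j|\sim\Bin(N,p)$ and $d_{\cF}(u)\sim\Bin(N,p)$, each with mean $D=pN\ge p_-N$. By the first inequality of Lemma~\ref{lem:chernoff}, each single deviation $|X-D|>\eta D$ has probability at most $2\exp(-\eta^2p_-N/3)$. A union bound over the $2N$ variables gives failure probability at most $4N\exp(-\eta^2p_-N/3)$. This tends to $0$ at a rate depending only on $p_-$ and $\eta$.

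The main step is (iii). Let $t=\lfloor (1-\eta)\log N/(-\log(1-p))\rfloor$. It suffices to show that, with high probability, no $t$ indices cover $U$. Fix $J\subseteq[N]$ with $|J|=t$. Each $u$ is missed by every $F_j$, $j\in J$, independently with probability $q=(1-p)^t$. By the choice of $t$,
\[
q\ge(1-p)^{(1-\eta)\log N/(-\log(1-p))}=N^{-(1-\eta)} .
\]
Hence
\[
\Pr(J\text{ covers }U)=(1-q)^N\le\exp(-qN)\le\exp(-N^{\eta}).
\]
A union bound over at most $\binom{N}{t}\le N^t$ sets $J$ gives failure probability at most $\exp\bigl(t\log N-N^{\eta}\bigr)$. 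Covers with fewer than $t$ members are handled the same way, or by monotonicity, since any subfamily of a cover can be padded to size $t$.

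The obstacle is uniformity: we need $t\log N=o(N^\eta)$ uniformly in $p$. Since $-\log(1-p)\ge -\log(1-p_-)\ge p_-$, we have $t\le \log N/p_-$, so $t\log N\le(\log N)^2/p_-$, which is $o(N^\eta)$ with a threshold depending only on $p_-$ and $\eta$. This is why $N_0$ does not depend on $p_+$.

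Finally, choose $N_0$ so that all three failure probabilities are below $1/3$. For $t$ to be meaningful we also need $t\ge1$. If $t=0$, (iii) would just say $\cov(\cF)>$ a number less than $1$, which holds automatically since $N\ge1$ and $U\ne\emptyset$ force $\cov(\cF)\ge1$. Thus the strict inequality $\cov(\cF)>t'$ with $t'=(1-\eta)\log N/(-\log(1-p))$ follows in all cases from ``no $\lfloor t'\rfloor$ sets cover $U$''.
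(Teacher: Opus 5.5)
Your proposal is correct and follows essentially the same route as the paper: the random set system with inclusion probability $p$, Chernoff bounds plus a union bound over the $2N$ binomial variables for (i)--(ii), and a first-moment bound over index sets of size at most $\lfloor (1-\eta)\log N/(-\log(1-p))\rfloor$, made uniform in $p$ by using only $p\ge p_-$. The only cosmetic difference is that you union-bound over sets of size exactly $t$ and pad smaller covers, which tacitly needs $t\le N$ (true for large $N$ since $t\le \log N/p_-$). The paper instead sums directly over all sizes $r\le q_0$.
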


\begin{proof}
Let $U$ be a fixed set of $N$ elements. For every pair
$(u,j)\in U\times[N]$, independently decide that $u\in F_j$
with probability $p$. Let $I_{u,j}$ be the indicator of the event
$u\in F_j$. Thus the variables $I_{u,j}$ are mutually independent
and satisfy
$\Pr(I_{u,j}=1)=p$ and
$\Pr(I_{u,j}=0)=1-p$. 

Fix an index $j\in[N]$. By construction,
$|F_j|=\sum_{u\in U} I_{u,j}$. 
For this fixed $j$, the $N$ summands are independent Bernoulli
random variables with success probability $p$. Hence
$|F_j|\sim\Bin(N,p)$
and
$\mathbb E|F_j|=Np$.  By Lemma~\ref{lem:chernoff}, we have
\[
\Pr\bigl(|F_j|>(1+\eta)D\bigr)
\le
\Pr\bigl(\lvert |F_j|-D\rvert>\eta D\bigr)
\le
2\exp\left(-\frac{\eta^2D}{3}\right)
\le
2\exp\left(-\frac{\eta^2p_-N}{3}\right),
\]
where the last inequality follows from $D=pN\ge p_-N$.
There are exactly $N$ possible indices $j$. Therefore, by the
union bound,
\[
\Pr\bigl(\textup{(i) fails}\bigr)
\le
2N\exp\left(-\frac{\eta^2p_-N}{3}\right).
\]

Fix an element $u\in U$. Its degree in the indexed set system is
$d_{\cF}(u)=\sum_{j=1}^{N} I_{u,j}$. 
For this fixed $u$, the $N$ summands are again independent
Bernoulli random variables with success probability $p$.
Consequently,
$d_{\cF}(u)\sim\Bin(N,p)$
and $\mathbb E d_{\cF}(u)=Np=D$. 
The element $u$ violates the inequalities in \textup{(ii)} only if
$d_{\cF}(u)<(1-\eta)D$ or
$d_{\cF}(u)>(1+\eta)D$.
Thus the failure event for this particular element is contained in
$\bigl\{\lvert d_{\cF}(u)-D\rvert>\eta D\bigr\}$. 
Applying Lemma~\ref{lem:chernoff}, we obtain
\[
\Pr\bigl(u\text{ violates \textup{(ii)}}\bigr)
\le
2\exp\left(-\frac{\eta^2D}{3}\right)
\le
2\exp\left(-\frac{\eta^2p_-N}{3}\right).
\]
Taking the union bound over all $N$ elements of $U$ gives
\[
\Pr\bigl(\textup{(ii) fails}\bigr)
\le
2N\exp\left(-\frac{\eta^2p_-N}{3}\right).
\]
Combining the estimates for \textup{(i)} and \textup{(ii)}, we have
\begin{equation}\label{eq:set-degree-failure}
\Pr\bigl(\textup{(i) or \textup{(ii)} fails}\bigr)
\le
4N\exp\left(-\frac{\eta^2p_-N}{3}\right).
\end{equation}
Notice that the union bound does not require the failure events
corresponding to different indices or different elements to be
independent.

Let
$\lambda=-\log(1-p),$ and
$q_0=\left\lfloor
\frac{(1-\eta)\log N}{\lambda}
\right\rfloor$. 
Fix a set of indices $J\subseteq[N]$ with
$|J|=r\le q_0$. 
For each fixed $u\in U$, the events
$\{u\in F_j\}$, $j\in J$, are mutually independent. Therefore
\[
\Pr\left(
u\notin\bigcup_{j\in J}F_j
\right)
=(1-p)^r.
\]
Since
$r\le q_0
\le
\frac{(1-\eta)\log N}{\lambda},$
we have
\[
(1-p)^r=
\exp(-\lambda r)
\ge
\exp\bigl(-(1-\eta)\log N\bigr)
=N^{-(1-\eta)}.
\]
For distinct elements $u\in U$, the events
$\left\{
u\in\bigcup_{j\in J}F_j
\right\}$
are independent. It follows that
\[
\Pr\left(
U=\bigcup_{j\in J}F_j
\right)=
\left(1-(1-p)^r\right)^N\le
\exp\bigl(-N(1-p)^r\bigr)\le \exp(-N^\eta),
\]
where the first inequality follows from $1-x\le e^{-x}$, and the
second follows from
$N(1-p)^r\ge N\cdot N^{-(1-\eta)}=N^\eta$. 

Let
$\lambda_-=-\log(1-p_-)>0$. 
Since $p\ge p_-$, we have $\lambda\ge\lambda_-$, and hence
$q_0
\le
\frac{(1-\eta)\log N}{\lambda_-}$. 
After increasing $N_0$ if necessary, we may assume that $q_0<N$.
The number of subsets $J\subseteq[N]$ with $|J|\le q_0$ is at most
\[
\sum_{r=0}^{q_0}\binom Nr
\le
(q_0+1)N^{q_0}.
\]
Since
$q_0\le c_0\log N,$ where
$c_0=\frac{1-\eta}{\lambda_-},$
and $q_0+1\le N$ for all sufficiently large $N$, we have
\[
\begin{aligned}
(q_0+1)N^{q_0}
=\exp\bigl(\log(q_0+1)+q_0\log N\bigr)
\le
\exp\bigl(\log N+c_0(\log N)^2\bigr)
\le
\exp\bigl(C(\log N)^2\bigr)
\end{aligned}
\]
for a constant $C=C(p_-,\eta)$.

A further union bound over all possible choices of $J$ gives
\begin{equation}\label{eq:set-cover-failure}
\Pr\bigl(
\text{some family of at most $q_0$ sets covers $U$}
\bigr)
\le
\exp\bigl(C(\log N)^2-N^\eta\bigr).
\end{equation}

The right-hand sides of
\eqref{eq:set-degree-failure} and
\eqref{eq:set-cover-failure} tend to $0$ as $N\to\infty$.
All constants in these estimates depend only on
$p_-$ and $\eta$, and not on the particular value of
$p\in[p_-,p_+]$. We may therefore choose
$N_0=N_0(p_-,\eta)$
so that, for every $N\ge N_0$ and every
$p\in[p_-,p_+]$, the sum of the two failure probabilities is
smaller than $1$.

Consequently, there is an outcome for which
\textup{(i)} and \textup{(ii)} hold and no family of at most
$q_0$ indexed sets covers $U$. For this outcome,
\[
\cov(\cF)\ge q_0+1
>
\frac{(1-\eta)\log N}{\lambda}
=
\frac{(1-\eta)\log N}{-\log(1-p)}.
\]
This proves \textup{(iii)} and completes the proof.
\end{proof}



\begin{lemma}\label{lem:attachment}
Fix constants
$0<\rho<1,$
$0<\eta<1,$
$K>0,$ and
$0<\alpha\le1$. 
There exists $k_0=k_0(\rho,\eta,K,\alpha)$ such that the following holds for every integer $k\ge k_0$. Let $B$ and $C$ be disjoint sets satisfying
\[
\frac{(1+\eta)k}{\rho}\le |B|\le Kk,
\qquad
k^\alpha\le |C|=s\le k.
\]
Then there is a bipartite graph with bipartition $(B,C)$ such that
\begin{enumerate}[label=\textup{(\roman*)},leftmargin=2.2em]
\item every vertex of $C$ has at least $k$ neighbors in $B$;
\item every set $X\subseteq B$ that dominates $C$ satisfies
$|X|>\frac{(1-\eta)\log s}{-\log(1-\rho)}$. 
\end{enumerate}
\end{lemma}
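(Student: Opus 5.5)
The plan is to take a random bipartite graph with a single edge probability, as in the proof of Lemma~\ref{lem:set-system}, and check (i) and (ii) by union bounds. Put every pair $(b,c)\in B\times C$ in the edge set independently with probability $\rho$, and let $\lambda=-\log(1-\rho)>0$. I will show that for $k\ge k_0(\rho,\eta,K,\alpha)$ the probability that (i) fails plus the probability that (ii) fails is less than $1$.

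For (i), fix $c\in C$. Then $d(c)\sim\Bin(|B|,\rho)$ has mean $\mu=\rho|B|\ge(1+\eta)k$. Set $\theta=\frac{\eta}{1+\eta}$, so that $(1-\theta)\mu\ge k$. The second Chernoff bound of Lemma~\ref{lem:chernoff} then gives
\[
\Pr(d(c)<k)\le\exp\!\left(-\frac{\theta^2(1+\eta)k}{2}\right).
\]
A union bound over the $s\le k$ vertices of $C$ bounds the failure probability of (i) by $k\,e^{-c_1k}$, where $c_1=c_1(\eta)>0$. This tends to $0$.

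For (ii), let $q_0=\left\lfloor\frac{(1-\eta)\log s}{\lambda}\right\rfloor$ and fix $X\subseteq B$ with $|X|=r\le q_0$. Each $c\in C$ misses $X$ with probability $(1-\rho)^r=e^{-\lambda r}\ge s^{-(1-\eta)}$. These events are independent over $c$, so
\[
\Pr(X\text{ dominates }C)=\bigl(1-(1-\rho)^r\bigr)^s\le\exp\bigl(-s(1-\rho)^r\bigr)\le\exp(-s^{\eta}).
\]
The number of sets $X\subseteq B$ with $|X|\le q_0$ is at most $(|B|+1)^{q_0}\le(Kk+1)^{q_0}$. Since $s\le k$, we have $q_0\le\frac{\log k}{\lambda}$, so this count is at most $\exp\!\bigl(C_2(\log k)^2\bigr)$ with $C_2=C_2(\rho,K)$.

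The one point needing care is this union bound. The ground set $B$ can have size up to $Kk$, while $s$ can be as small as $k^\alpha$, so the number of candidate sets $X$ is controlled by $\log k$ rather than $\log s$. The lower bound $s\ge k^\alpha$ resolves this: $s^\eta\ge k^{\alpha\eta}$, and $\exp\bigl(C_2(\log k)^2-k^{\alpha\eta}\bigr)\to0$ as $k\to\infty$. All constants depend only on $\rho,\eta,K,\alpha$.

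Choose $k_0$ so that both failure probabilities sum to less than $1$. Then some outcome satisfies (i), and in it no $X$ with $|X|\le q_0$ dominates $C$. Hence every dominating $X$ has $|X|\ge q_0+1>\frac{(1-\eta)\log s}{-\log(1-\rho)}$, which proves (ii).
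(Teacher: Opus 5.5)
Your proposal is correct and follows the paper's proof essentially step for step. Both use the same random bipartite graph with edge probability $\rho$. For (i), both apply the lower-tail Chernoff bound with a union bound over the $s\le k$ vertices of $C$. For (ii), both combine the per-set estimate $\exp(-s^{\eta})$ with a union bound over at most $\exp\bigl(C(\log k)^2\bigr)$ small sets, and use $s\ge k^{\alpha}$ to make the total $\exp\bigl(C(\log k)^2-k^{\alpha\eta}\bigr)\to 0$.

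The only differences are cosmetic, and both variants are valid. You fix $\theta=\frac{\eta}{1+\eta}$ rather than $\theta=1-k/(\rho|B|)$, and you bound the number of small sets by $(|B|+1)^{q_0}$ rather than $(h_0+1)|B|^{h_0}$.
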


\begin{proof}
Let $m=|B|$ and
$s=|C|$. 
For every pair $(b,v)\in B\times C$, include the edge $bv$
independently with probability $\rho$. Let $I_{b,v}$ be the indicator
of the event that $bv$ is included. Thus all variables $I_{b,v}$ are
mutually independent and satisfy
$\Pr(I_{b,v}=1)=\rho$ and
$\Pr(I_{b,v}=0)=1-\rho$. 

Fix a vertex $v\in C$. Its number of neighbors in $B$ is
$d_B(v)=\sum_{b\in B} I_{b,v}$. 
The $m$ summands are independent Bernoulli random variables with
success probability $\rho$. Hence
$d_B(v)\sim\Bin(m,\rho),$ and
$\mathbb E d_B(v)=\rho m$. 
Let $\overline d=\rho m$. 
The hypothesis on $|B|$ gives
$\overline d\ge(1+\eta)k$. 
Define
$\theta=1-\frac{k}{\overline d}$. 
Then $0<\theta<1$, and
$
\theta
\ge
1-\frac{1}{1+\eta}
=
\frac{\eta}{1+\eta}.
$
Since
$
(1-\theta)\overline d=k,
$
the lower-tail estimate in Lemma~\ref{lem:chernoff} gives
\[
\Pr\bigl(d_B(v)<k\bigr)
\le \Pr\bigl(d_B(v)\le k\bigr)
=\Pr\bigl(d_B(v)\le(1-\theta)\overline d\bigr)
\le \exp\left(-\frac{\theta^2\overline d}{2}\right).
\]
Moreover,
\[
\theta^2\overline d
\ge
\left(\frac{\eta}{1+\eta}\right)^2(1+\eta)k
=
\frac{\eta^2}{1+\eta}k.
\]
Consequently,
\[
\Pr\bigl(d_B(v)<k\bigr)
\le
\exp\left(
-\frac{\eta^2}{2(1+\eta)}k
\right).
\]

There are $s$ vertices in $C$, and $s\le k$. Therefore, by the
union bound,
\begin{equation}\label{eq:attachment-degree-failure}
\Pr\bigl(\textup{(i) fails}\bigr)
\le
s\exp\left(
-\frac{\eta^2}{2(1+\eta)}k
\right)
\le
k\exp\left(
-\frac{\eta^2}{2(1+\eta)}k
\right).
\end{equation}

Let
$\lambda=-\log(1-\rho)>0,$
and $h_0=
\left\lfloor
\frac{(1-\eta)\log s}{\lambda}
\right\rfloor$. 
Since $s\le k$, we have
$
h_0\le
\frac{(1-\eta)\log k}{\lambda}
=O(\log k)$. 
On the other hand,
$m\ge\frac{(1+\eta)k}{\rho}$. 
It follows that $h_0<m$ for all sufficiently large $k$.

Fix a set $X\subseteq B$ with $|X|=r\le h_0$. For each fixed
$v\in C$, the $r$ potential edges between $v$ and $X$ are independently
absent with probability $1-\rho$. Hence
\[
\Pr\bigl(v\text{ has no neighbor in }X\bigr)=(1-\rho)^r.
\]
The definition of $h_0$ gives
$r\le h_0
\le \frac{(1-\eta)\log s}{\lambda},$
and hence
\[
(1-\rho)^r
=\exp(-\lambda r)
\ge
\exp\bigl(-(1-\eta)\log s\bigr)
=s^{-(1-\eta)}.
\]

For distinct vertices $v\in C$, the events
$
\{v\text{ has no neighbor in }X\}
$
are independent. Note that $X$ dominates $C$ if and only
if none of these events occurs. Therefore
\[
\Pr(X\text{ dominates }C)
=\left(1-(1-\rho)^r\right)^s
\le \exp\bigl(-s(1-\rho)^r\bigr)
\le \exp(-s^\eta),
\]
where the first inequality follows from $1-x\le e^{-x}$, and the
second follows from
$s(1-\rho)^r\ge
s\cdot s^{-(1-\eta)}
=s^\eta$. 

The number of subsets $X\subseteq B$ with $|X|\le h_0$ is at most
\[
\sum_{r=0}^{h_0}\binom{m}{r}
\le (h_0+1)m^{h_0}.
\]

Let $c_0=\frac{1-\eta}{\lambda}$. 
Then $h_0\le c_0\log k$. 
After increasing $k_0$ if necessary, we may assume that
$h_0+1\le k$ and
$m\le Kk\le k^2$. 
Consequently,
we have
\[
\begin{aligned}
(h_0+1)m^{h_0}
=\exp\bigl(\log(h_0+1)+h_0\log m\bigr)
\le
\exp\bigl(\log k+2c_0(\log k)^2\bigr)
\le
\exp\bigl(C(\log k)^2\bigr)
\end{aligned}
\]
for a constant $C=C(\rho,\eta)$.

By the definition of $h_0$, conclusion \textup{(ii)} fails precisely
when some set $X\subseteq B$ with $|X|\le h_0$ dominates $C$.
Therefore, using the union bound and the estimate above for each fixed
$X$, we obtain
\begin{align}
\Pr\bigl(\textup{(ii) fails}\bigr)
\le
\exp\bigl(C(\log k)^2-s^\eta\bigr)
\le
\exp\bigl(C(\log k)^2-k^{\alpha\eta}\bigr).
\label{eq:attachment-cover-failure}
\end{align}
Since $\alpha\eta>0$, the right-hand side tends to $0$ as
$k\to\infty$.

The right-hand side of
\eqref{eq:attachment-degree-failure} also tends to $0$. Hence, after
increasing $k_0=k_0(\rho,\eta,K,\alpha)$ if necessary, the sum of the
failure probabilities in
\eqref{eq:attachment-degree-failure} and
\eqref{eq:attachment-cover-failure} is smaller than $1$ for every
$k\ge k_0$. Thus, with positive probability, neither conclusion fails.
Any such outcome gives the required bipartite graph.
\end{proof}

\section{Proof of the lower bound in Theorem \ref{thm:main}}

Fix $\sigma>0$ and $\varepsilon>0$. 
Let $\varepsilon'=\min\{\varepsilon,\frac{1}{2}\}$. It suffices to prove the result with $\varepsilon'$ in place of $\varepsilon$. Relabeling $\varepsilon'$ as $\varepsilon$, we may assume $0<\varepsilon\le\frac{1}{2}$. We construct a graph that is used to derive the lower bound in Theorem \ref{thm:main}. 

\subsection{Constants used in the construction}

The next lemma records all constant choices in one place. Each condition is used later for one specific purpose.

\begin{lemma}\label{lem:constant-choice}
There exist constants $W$, $\delta$, and $\beta$ with
$W\ge\max\{6,2(1+\sigma)\},$
$0<\delta<\frac{1}{2},$ and 
$\beta>0,$
for which the following statements hold. Define
\[
a=\frac{1+3\delta}{(1-\delta)^2},
\qquad
\Psi_\delta(c)=\frac{(1-\delta)^2}{a}\,
\Phi\left(\frac{c}{a}\right),
\qquad
\mu=-\log\delta.
\]
Then
\begin{align}
&\Phi\left(\frac{W}{3}\right)>1-\frac{\varepsilon}{8},
\label{eq:C1}\\
&a<\min\left\{1+\frac{\sigma}{2},\frac{3}{2}\right\},
\label{eq:C2}\\
&(1+\sigma)\frac{1+\delta}{1+3\delta}>1,
\label{eq:C3}\\
&\left|\Psi_\delta(c)-\Phi(c)\right|<\frac{\varepsilon}{8}
\quad\text{for every }c\in[1+\sigma,W],
\label{eq:C4}\\
&\frac{(1-\delta)^2}{a}>1-\frac{\varepsilon}{8},
\label{eq:C5}\\
&\frac{4(1+\delta)\beta\mu}{1-\delta}<\delta.
\label{eq:C6}
\end{align}
\end{lemma}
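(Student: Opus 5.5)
The plan is to choose the constants one at a time, in the order $W$, then $\delta$, then $\beta$. Each condition involves only constants that have already been fixed, plus the one being chosen.

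\emph{Step 1: choose $W$.} By Lemma~\ref{lem:Phi}, $\Phi(x)\to1$ as $x\to\infty$. So there is $W\ge\max\{6,2(1+\sigma)\}$ with $\Phi(W/3)>1-\varepsilon/8$, which gives \eqref{eq:C1}. We fix this $W$.

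\emph{Step 2: choose $\delta$.} As $\delta\to0^+$ we have $a=a(\delta)\to1$ and $(1-\delta)^2/a\to1$. Hence \eqref{eq:C2} and \eqref{eq:C5} hold for all small $\delta$. For \eqref{eq:C3}, note that $(1+\delta)/(1+3\delta)\to1$, so the left-hand side tends to $1+\sigma>1$; it therefore exceeds $1$ for small $\delta$.

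The only condition needing care is \eqref{eq:C4}, which must hold uniformly on the compact interval $[1+\sigma,W]$. By \eqref{eq:C2}, for small $\delta$ we have $a<1+\sigma/2$. Then for $c\in[1+\sigma,W]$,
\[
\frac{c}{a}\ge\frac{1+\sigma}{1+\sigma/2}>1,
\]
so $c/a$ lies in the compact set
\[
I=\left[\frac{1+\sigma}{1+\sigma/2},\,W\right]\subset(1,\infty).
\]
On $I$, $\Phi$ is continuous, hence uniformly continuous, and bounded by $1$. We then split
\[
|\Psi_\delta(c)-\Phi(c)|\le\left|\frac{(1-\delta)^2}{a}-1\right|\Phi\!\left(\frac ca\right)+\left|\Phi\!\left(\frac ca\right)-\Phi(c)\right|.
\]
The first term is at most $|(1-\delta)^2/a-1|\to0$. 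For the second, $|c/a-c|\le W|1-1/a|\to0$, and both $c/a$ and $c$ lie in $I$. Uniform continuity of $\Phi$ on $I$ then makes this term small uniformly in $c$. Choosing $\delta\in(0,1/2)$ small enough, all of \eqref{eq:C2}--\eqref{eq:C5} hold simultaneously.

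\emph{Step 3: choose $\beta$.} With $\delta$ fixed, $\mu=-\log\delta>0$ is a positive constant. Condition \eqref{eq:C6} is linear in $\beta$, so any
\[
0<\beta<\frac{\delta(1-\delta)}{4(1+\delta)\mu}
\]
works.

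The only genuine obstacle is the uniformity in \eqref{eq:C4}. It is handled by keeping $c/a$ inside a fixed compact subinterval of $(1,\infty)$, which is exactly what \eqref{eq:C2} guarantees.
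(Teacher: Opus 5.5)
Your proposal is correct and follows the paper's proof essentially step for step: fix $W$ via $\Phi(x)\to1$, take $\delta$ small using $a(\delta)\to1$ and $(1-\delta)^2/a(\delta)\to1$, and handle \eqref{eq:C4} by uniform continuity of $\Phi$ on a fixed compact interval $I\subset(1,\infty)$ secured by \eqref{eq:C2}. The paper takes the left endpoint of $I$ to be $(1+\sigma)/\min\{1+\sigma/2,3/2\}$ rather than your $(1+\sigma)/(1+\sigma/2)$, which makes no difference; it then solves \eqref{eq:C6} linearly for $\beta$, as you do, and your only implicit step is $c/a\le W$, which needs $a\ge1$ and holds immediately since $1+3\delta\ge1\ge(1-\delta)^2$.
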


\begin{proof}
We choose the constants in the order
$W, \delta, \beta$. 
First, by Lemma~\ref{lem:Phi},
$\Phi(x)\rightarrow 1$
as $x\rightarrow\infty$. 
Hence we may choose
$W\geq \max\{6,\,2(1+\sigma)\}$
so large that
$\Phi(\frac{W}{3})>1-\frac{\varepsilon}{8}$, which is  
\eqref{eq:C1}.

We next choose $\delta$. For clarity, write
$a(\delta):=\frac{1+3\delta}{(1-\delta)^2}$
and
$b(\delta):=\frac{(1-\delta)^2}{a(\delta)}
          =\frac{(1-\delta)^4}{1+3\delta}$. 
As $\delta\to 0$, we have
$a(\delta)\rightarrow 1$
and $b(\delta)\rightarrow 1$. 

Let
$A_*:=\min\left\{1+\frac{\sigma}{2},\,\frac{3}{2}\right\}$. 
Since $\sigma>0$, we have $A_*>1$. Therefore, because
$a(\delta)\to 1$, there exists $\delta_7>0$ such that for every
$0<\delta<\delta_7$, $a(\delta)<A_*$. 
So \eqref{eq:C2} is obeyed.

Define
$
h(\delta):=(1+\sigma)\frac{1+\delta}{1+3\delta}.
$
The function $h$ is continuous at $0$, and
$
h(0)=1+\sigma>1.
$
Hence there exists $\delta_8>0$ such that
for every $0<\delta<\delta_8$,
$(1+\sigma)\frac{1+\delta}{1+3\delta}>1$. 
Thus \eqref{eq:C3} holds for all sufficiently small $\delta$.

Choose
$0<\delta_0<\min\left\{\frac{1}{2},\delta_7\right\}$. 
For every $\delta\in[0,\delta_0]$, we have
$1\leq a(\delta)<A_*$. 
Consequently, for every $c\in[1+\sigma,W]$,
\[
\frac{1+\sigma}{A_*}
<
\frac{c}{a(\delta)}
\leq c
\leq W.
\]
Let
$m_*:=\frac{1+\sigma}{A_*}$. 
Since $A_*<1+\sigma$, we have $m_*>1$. Thus 
$c$, $\frac{c}{a(\delta)}\in I:=[m_*,W]\subset(1,\infty)$.
The function $\Phi$ is continuous on $(1,\infty)$, and hence it
is uniformly continuous on  $I$. Therefore
there exists $\tau>0$ such that, for all $x,y\in I$,
\[
|x-y|<\tau
\quad\Longrightarrow\quad
|\Phi(x)-\Phi(y)|<\frac{\varepsilon}{16}.
\]
Since $a(\delta)\to1$ and $b(\delta)\to1$, there exists
$\delta_9\in(0,\delta_0)$ such that, for all
$0<\delta<\delta_9$,
$W\left|\frac{1}{a(\delta)}-1\right|<\tau$
and
$|b(\delta)-1|<\frac{\varepsilon}{16}$.
For such a $\delta$ and every $c\in[1+\sigma,W]$, we have
\[
\left|\frac{c}{a(\delta)}-c\right|
=
c\left|\frac{1}{a(\delta)}-1\right|
\leq
W\left|\frac{1}{a(\delta)}-1\right|
<\tau.
\]
Hence, by the uniform continuity of $\Phi$ on $I$,
\[
\left|
\Phi\left(\frac{c}{a(\delta)}\right)-\Phi(c)
\right|
<
\frac{\varepsilon}{16}.
\]
As $0<\Phi(x)<1$ for $x>1$, we have
\begin{align*}
|\Psi_\delta(c)-\Phi(c)|
=
\left|
b(\delta)\Phi\left(\frac{c}{a(\delta)}\right)
-\Phi(c)
\right|
&\leq
|b(\delta)-1|
\Phi\left(\frac{c}{a(\delta)}\right)
+
\left|
\Phi\left(\frac{c}{a(\delta)}\right)-\Phi(c)
\right|\\
&<
\frac{\varepsilon}{16}
+
\frac{\varepsilon}{16}=
\frac{\varepsilon}{8},
\end{align*}
for every
$c\in[1+\sigma,W]$, and  so \eqref{eq:C4} holds.

As
$\frac{(1-\delta)^2}{a(\delta)}
=b(\delta)\rightarrow1$
as $\delta\to 0$, there exists $\delta_{10}>0$ such that
for every
$0<\delta<\delta_{10}$,
$\frac{(1-\delta)^2}{a(\delta)}
>1-\frac{\varepsilon}{8}$, so \eqref{eq:C5} holds.

Choose
$0<\delta<\min\left\{
\frac{1}{2},\,\delta_7,\,\delta_8,\,
\delta_9,\,\delta_{10}\right\}$. 
With this choice, \eqref{eq:C2}--\eqref{eq:C5}  are all obeyed.

Finally, fix this value of $\delta$ and let
$\mu:=-\log\delta$. 
Since $0<\delta<1$, we have $\mu>0$. Choose, for example,
$\beta:=\frac{\delta(1-\delta)}{8(1+\delta)\mu}$. 
Then $\beta>0$, and
$
\frac{4(1+\delta)\beta\mu}{1-\delta}
=\frac{\delta}{2}
<\delta$. 
Thus \eqref{eq:C6} holds.
\end{proof}


\subsection{Parameters}

Fix constants $W$, $\delta$, and $\beta$ satisfying
Lemma~\ref{lem:constant-choice}, and define $a$, $\mu$, and $\Psi_\delta$ accordingly. We assume throughout
this section that $k$ is sufficiently large in terms of
$\sigma$ and $\varepsilon$. The required lower bound on $k$ is
independent of $n$. Let
\[
\nu=\frac{n}{k+1}\ge1+\sigma
\text{ and }
x=\frac{n}{k}=\nu\left(1+\frac{1}{k}\right).
\]
Let
\[
R=
\begin{cases}
1,&x\le W,\\
\lceil \frac{x}{W}\rceil,&x>W
\end{cases}
\text{ and }
c=\frac{x}{R}.
\]
If $x\le W$, then $c=x\ge\nu\ge1+\sigma$. If $x>W$, then $c\le W$, so
\[
c=\frac{x}{\lceil \frac{x}{W}\rceil}
>\frac{x}{\frac{x}{W}+1}
=\frac{Wx}{x+W}
>\frac{W}{2}
\ge1+\sigma.
\]
In either case,
\begin{equation}\label{eq:c-range-new}
1+\sigma\le c\le W
\end{equation}
and $Rck=n$.
Let
\[
p=\frac{a}{c},
D_0=\lfloor\beta\log k\rfloor, 
N=\left\lceil\frac{D_0}{p}\right\rceil
\text{ and }
D=pN.
\]
Let
\[
\rho=1-\delta,
s=\lceil k^{\frac{1}{2}}\rceil \text{ and }
L=\left\lceil
\frac{(1+\delta)k}{\rho(1-\delta)D}
\right\rceil
=
\left\lceil
\frac{(1+\delta)k}{(1-\delta)^2D}
\right\rceil.
\]
By \eqref{eq:C2} and \eqref{eq:c-range-new},
\begin{equation}\label{eq:p-interval-new}
\frac{a}{W}\le p\le\frac{a}{1+\sigma}<1.
\end{equation}
Thus $p\in [\frac{a}{W}, \frac{a}{1+\sigma}]\subset (0,1)$. From the definition of $N$,
\begin{equation}\label{eq:D-bounds-new}
D_0\le D<D_0+1.
\end{equation}
Consequently, there are positive constants $b_1,b_2,b_3$, depending only on $\sigma$ and $\varepsilon$, such that for all sufficiently large $k$,
\begin{equation}\label{eq:N-bounds-new}
b_1\log k\le N\le b_2\log k \text{ and }
\left|\log N-\log\log(k+1)\right|\le b_3.
\end{equation}
The first pair of inequalities follows directly from \eqref{eq:p-interval-new} and $D_0\sim\beta\log k$; the second follows by taking logarithms.

Let $H=N(L+s)$.  We give the following estimate.

\begin{lemma}\label{lem:block-budget}
For all sufficiently large $k$,
$H\le ck$ and $NL\ge k+1$. 
The required lower bound on $k$ depends only on $\sigma$ and $\varepsilon$.
\end{lemma}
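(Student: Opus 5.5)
Both inequalities come from expanding $N$, $L$, $s$ and tracking the leading term $k/(1-\delta)^2$ against the lower-order error terms. For the second inequality we bound the ceiling in $L$ from below and use $D=pN$. This gives
\[
NL\ge N\cdot\frac{(1+\delta)k}{(1-\delta)^2D}=\frac{(1+\delta)k}{(1-\delta)^2p}=\frac{(1+\delta)c\,k}{(1-\delta)^2a}=\frac{(1+\delta)c\,k}{1+3\delta}.
\]
Here we used $a=\frac{1+3\delta}{(1-\delta)^2}$. Since $c\ge 1+\sigma$ by \eqref{eq:c-range-new}, condition \eqref{eq:C3} gives $\frac{(1+\delta)c}{1+3\delta}\ge\kappa$ for a constant $\kappa>1$ depending only on $\sigma,\varepsilon$. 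Hence $NL\ge\kappa k\ge k+1$ once $k\ge 1/(\kappa-1)$.

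For the first inequality we write $H=NL+Ns$ and use $L<\frac{(1+\delta)k}{(1-\delta)^2D}+1$. This gives
\[
H<\frac{(1+\delta)c\,k}{1+3\delta}+N+Ns .
\]
The main term equals $ck\bigl(1-\frac{2\delta}{1+3\delta}\bigr)$, so it leaves a slack of $\frac{2\delta}{1+3\delta}ck\ge\frac{2\delta(1+\sigma)}{1+3\delta}k$. This slack is a constant multiple of $k$ with the constant depending only on $\sigma,\varepsilon$, because $\delta$ was fixed in Lemma~\ref{lem:constant-choice}.

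By \eqref{eq:N-bounds-new} we have $N\le b_2\log k$ and $s\le k^{1/2}+1$. So $N+Ns\le b_2\log k\,(k^{1/2}+2)=o(k)$, and it is eventually below the slack. Therefore $H\le ck$ for all $k$ beyond a threshold depending only on $\sigma,\varepsilon$.

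The only point needing care is uniformity: the threshold must not depend on $n$, which enters only through $c$ and $p$. This holds because $c\in[1+\sigma,W]$ and $p$ lies in a fixed compact subinterval of $(0,1)$ by \eqref{eq:p-interval-new}. Consequently the bounds \eqref{eq:N-bounds-new} and the constants $\kappa$ and the slack constant depend only on $\sigma$ and $\varepsilon$. We also need $D\ge D_0\ge 1$, which holds for large $k$ and keeps $L$ well defined.
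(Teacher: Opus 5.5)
Your proposal is correct and follows the paper's proof essentially step for step: you bound the ceiling in $L$ on both sides, use $D=pN$ and $p=a/c$ to get the main term $\frac{(1+\delta)ck}{1+3\delta}$, and absorb $N(s+1)=o(k)$ into the slack $\frac{2\delta}{1+3\delta}ck$. The only cosmetic difference is in proving $NL\ge k+1$: you use the uniform margin $\kappa>1$ together with the threshold $k\ge 1/(\kappa-1)$, whereas the paper observes that $NL$ is an integer strictly larger than $k$.
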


\begin{proof}
Since $D=pN$ and $p=\frac{a}{c}$, the definition of $L$ gives
\begin{equation}
NL\le N\left(\frac{(1+\delta)k}{(1-\delta)^2D}+1\right)
=ck\frac{1+\delta}{1+3\delta}+N.
\label{eq:NL-upper-new}
\end{equation}
Let
$\theta=1-\frac{1+\delta}{1+3\delta}
=\frac{2\delta}{1+3\delta}>0$. 
By \eqref{eq:N-bounds-new} and $s=\lceil k^{\frac{1}{2}}\rceil$,
\[
\frac{N(s+1)}{k}\le
\frac{b_2\log k\,(k^{\frac{1}{2}}+2)}{k}\longrightarrow0.
\]
Hence, after increasing the lower bound on $k$,
$N(s+1)\le\theta(1+\sigma)k\le\theta ck$. 
Combining this inequality with \eqref{eq:NL-upper-new} yields
$H=NL+Ns\le ck$. 

For the lower bound, the definition of $L$ gives
$NL\ge ck\frac{1+\delta}{1+3\delta}$. 
By \eqref{eq:C3} and $c\ge1+\sigma$, the right-hand side is larger than $k$. Since $NL$ is an integer, $NL\ge k+1$.
\end{proof}

\subsection{Building the graph}

By \eqref{eq:p-interval-new} and \eqref{eq:N-bounds-new},
Lemma~\ref{lem:set-system} applies for all sufficiently large $k$,
with
$p_-=\frac{a}{W},$
$p_+=\frac{a}{1+\sigma},$
$\eta=\delta,$
and with the present values of $N$ and $p$.
Take $R$ disjoint copies
$\cF_i=(U_i;F_{i,1},\ldots,F_{i,N})$
$(1\le i\le R)$
of the resulting set system. Let
$U=\bigcup_{i=1}^R U_i,$ and
$t=|U|=RN$. 

For every $u\in U$, create a clique $C_u$ of order $s$, which we call a satellite clique. The cliques $C_u$ are pairwise disjoint and pairwise anticomplete. For each indexed set $F_{i,j}$, create $L$ vertices
$a_{i,j,1},\ldots,a_{i,j,L},$
each carrying the label $F_{i,j}$. Let $A_0$ be the set of all these vertices. If $u\in U_i$, define
\[
B_u=\{a_{i,j,r}\in A_0:u\in F_{i,j}\}.
\]
Because each label is repeated exactly $L$ times,
$
|B_u|=L d_{\cF_i}(u).
$

The degree bounds in Lemma~\ref{lem:set-system} imply
$
|B_u|\ge(1-\delta)DL
\ge\frac{(1+\delta)k}{\rho}.
$
They also give
$
|B_u|
\le(1+\delta)DL
\le\frac{(1+\delta)^2}{(1-\delta)^2}k+(1+\delta)D.
$
By \eqref{eq:D-bounds-new}, $D\le\beta\log k+1$. Therefore there is a constant
$
K=2+\frac{(1+\delta)^2}{(1-\delta)^2}
$
such that $|B_u|\le Kk$ for all sufficiently large $k$ and every $u$.

Apply Lemma~\ref{lem:attachment} to every pair $(B_u,C_u)$ with
$\eta=\delta,$
$\rho=1-\delta,$
$\alpha=\frac{1}{2},$
and the constant $K$ above. For each $u\in U$, choose one such bipartite graph on
$(B_u,C_u)$ and add all of its edges. Add no other edges between $A_0$ and the satellite cliques.

Now, there are
$RNL+RNs=RH\le Rck=n$
vertices. Add $n-RH$ vertices, let $A_1$ be their set, put
$A=A_0\cup A_1,$
and make $A$ a clique. No vertex of $A_1$ is adjacent to a satellite
clique. This completes the construction of $G$. The construction is
illustrated schematically in
Figure~\ref{fig:satellite-attachment-schematic}.

\begin{figure}[htbp]
\centering
\begin{tikzpicture}[
  x=1cm,
  y=1cm,
  block/.style={
    draw=black,
    rounded corners=2pt,
    minimum width=1.85cm,
    minimum height=1.10cm,
    fill=white,
    line width=0.5pt
  },
  avertex/.style={
    circle,
    draw=black,
    fill=white,
    inner sep=1.2pt,
    line width=0.45pt
  },
  cvertex/.style={
    circle,
    draw=black,
    fill=white,
    inner sep=1.25pt,
    line width=0.45pt
  }
]

\draw[
  rounded corners=6pt,
  line width=0.8pt,
  fill=black!3
]
  (-6.55,-1.3) rectangle (6.55,1.12);

\draw[
  dashed,
  line width=0.55pt
]
  (4.85,-1.3) -- (4.85,1.08);

\node[font=\bfseries] at (-6.08,0.83) {$A_0$};
\node[font=\bfseries] at (5.25,0.83) {$A_1$};
\node[font=\bfseries] at (0,-1.6) {$A=A_0\cup A_1$};

\node[block] at (-5.25,0.02) {};
\node[block] at (-2.95,0.02) {};
\node[block] at (-0.65,0.02) {};
\node[block] at ( 1.6,0.02) {};
\node[block] at ( 3.75,0.02) {};

\node[font=\scriptsize\bfseries]
  at (-4.9,0.8) {$F_{i,1}$};

\node[font=\scriptsize\bfseries]
  at (-2.95,0.8) {$F_{i,2}$};

\node[font=\scriptsize\bfseries]
  at (-1,0.8) {$F_{i,j}$};

\node[font=\scriptsize\bfseries]
  at (1.5,0.8) {$F_{i,j'}$};

\node[font=\scriptsize\bfseries]
  at (4,0.8) {$F_{i,N}$};

\node[avertex] (a11) at (-5.66,0.03) {};
\node[avertex] (a1L) at (-4.84,0.03) {};

\node[font=\tiny]
  at (-5.25,0.03) {$\cdots$};

\node[font=\tiny,anchor=north]
  at (-5.66,-0) {$a_{i,1,1}$};

\node[font=\tiny,anchor=north]
  at (-4.84,-0) {$a_{i,1,L}$};

\node[avertex] (a21) at (-3.36,0.03) {};
\node[avertex] (a2L) at (-2.54,0.03) {};

\node[font=\tiny]
  at (-2.95,0.03) {$\cdots$};

\node[font=\tiny,anchor=north]
  at (-3.36,-0) {$a_{i,2,1}$};

\node[font=\tiny,anchor=north]
  at (-2.54,-0) {$a_{i,2,L}$};

\node[avertex] (aj1) at (-1.06,0.03) {};
\node[avertex] (ajL) at (-0.24,0.03) {};

\node[font=\tiny]
  at (-0.65,0.03) {$\cdots$};

\node[font=\tiny,anchor=north]
  at (-1.06,-0) {$a_{i,j,1}$};

\node[font=\tiny,anchor=north]
  at (-0.24,-0) {$a_{i,j,L}$};

\node[avertex] (am1) at (1.24,0.03) {};
\node[avertex] (amL) at (2.06,0.03) {};

\node[font=\tiny]
  at (1.65,0.03) {$\cdots$};

\node[font=\tiny,anchor=north]
  at (1.24,-0) {$a_{i,j',1}$};

\node[font=\tiny,anchor=north]
  at (2.06,-0) {$a_{i,j',L}$};

\node[avertex] (aN1) at (3.2,0.03) {};
\node[avertex] (aNL) at (4.12,0.03) {};

\node[font=\tiny]
  at (3.65,0.03) {$\cdots$};

\node[font=\tiny,anchor=north]
  at (3.2,-0) {$a_{i,N,1}$};

\node[font=\tiny,anchor=north]
  at (4.12,-0) {$a_{i,N,L}$};

\node[avertex] at (5.30,0.12) {};
\node[avertex] at (5.83,0.12) {};
\node[avertex] at (6.36,0.12) {};

\node[font=\small]
  at (5.83,-0.22) {$\cdots$};

\draw[
  rounded corners=4pt,
  dashed,
  draw=blue!65!black,
  line width=0.75pt
]
  (-6,-0.8) rectangle (-2.2,0.27);

\node[
  font=\scriptsize\bfseries,
  text=blue!65!black,
  fill=white,
  inner sep=1pt
]
  at (-5.70,-1) {$B_{u_1}$};

\draw[
  rounded corners=4pt,
  densely dotted,
  draw=red!70!black,
  line width=0.85pt
]
  (-1.4,-0.4) rectangle (2.4,0.33);

\node[
  font=\scriptsize\bfseries,
  text=red!70!black,
  fill=white,
  inner sep=1pt
]
  at (-1.08,-0.7) {$B_{u_2}$};

\draw[
  rounded corners=4pt,
  dash pattern=on 3pt off 1.2pt,
  draw=green!45!black,
  line width=0.75pt
]
  (0.78,-0.8) rectangle (4.5,0.45);

\node[
  font=\scriptsize\bfseries,
  text=green!45!black,
  fill=white,
  inner sep=1pt
]
  at (3,-1) {$B_{u_3}$};

\draw[
  line width=0.8pt,
  fill=black!2
]
  (-4.10,2.05) circle (0.82);

\draw[
  line width=0.8pt,
  fill=black!2
]
  (0.50,2.05) circle (0.82);

\draw[
  line width=0.8pt,
  fill=black!2
]
  (3.45,2.05) circle (0.82);

\node[font=\bfseries]
  at (-4.10,3.08) {$C_{u_1}$};

\node[font=\bfseries]
  at (0.50,3.08) {$C_{u_2}$};

\node[font=\bfseries]
  at (3.45,3.08) {$C_{u_3}$};

\node[cvertex] (c11) at (-4.57,2.05) {};
\node[cvertex] (c12) at (-4.25,2.05) {};
\node[cvertex] (c13) at (-3.95,2.05) {};
\node[cvertex] (c1s) at (-3.63,2.05) {};

\node[font=\tiny,anchor=north]
  at (-4.57,1.94) {$1$};

\node[font=\tiny,anchor=north]
  at (-4.25,1.94) {$2$};

\node[font=\tiny,anchor=north]
  at (-3.95,1.9) {$\dots$};

\node[font=\tiny,anchor=north]
  at (-3.7,1.94) {$s$};

\node[cvertex] (c21) at (0.03,2.05) {};
\node[cvertex] (c22) at (0.35,2.05) {};
\node[cvertex] (c23) at (0.65,2.05) {};
\node[cvertex] (c2s) at (0.97,2.05) {};

\node[font=\tiny,anchor=north]
  at (0.03,1.94) {$1$};

\node[font=\tiny,anchor=north]
  at (0.35,1.94) {$2$};

\node[font=\tiny,anchor=north]
  at (0.65,1.9) {$\dots$};

\node[font=\tiny,anchor=north]
  at (0.94,1.94) {$s$};

\node[cvertex] (c31) at (2.98,2.05) {};
\node[cvertex] (c32) at (3.30,2.05) {};
\node[cvertex] (c33) at (3.60,2.05) {};
\node[cvertex] (c3s) at (3.92,2.05) {};

\node[font=\tiny,anchor=north]
  at (2.98,1.94) {$1$};

\node[font=\tiny,anchor=north]
  at (3.30,1.94) {$2$};

\node[font=\tiny,anchor=north]
  at (3.60,1.9) {$\dots$};

\node[font=\tiny,anchor=north]
  at (3.92,1.94) {$s$};

\draw[line width=0.65pt,blue!65!black]
  (a11) -- (c11);

\draw[line width=0.65pt,blue!65!black]
  (a1L) -- (c12);

\draw[line width=0.65pt,blue!65!black]
  (a21) -- (c1s);

\draw[line width=0.65pt,red!70!black]
  (aj1) -- (c21);

\draw[line width=0.65pt,red!70!black]
  (ajL) -- (c22);

\draw[line width=0.65pt,red!70!black]
  (am1) -- (c2s);

\draw[line width=0.65pt,green!45!black]
  (am1) -- (c31);

\draw[line width=0.65pt,green!45!black]
  (amL) -- (c32);

\draw[line width=0.65pt,green!45!black]
  (aN1) -- (c3s);

\end{tikzpicture}

\caption{Schematic of the construction of $G$.}
\label{fig:satellite-attachment-schematic}
\end{figure}

\begin{proposition}\label{prop:admissible}
The graph $G$ is a connected $n$-vertex graph with $\delta(G)\ge k$.
\end{proposition}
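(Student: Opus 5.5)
We check three things in turn: the vertex count, the minimum degree, and connectivity.

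\emph{Vertex count.} The vertex set is $A_0\cup A_1\cup\bigcup_{u\in U}C_u$. Here $|A_0|=RNL$ and $\sum_u|C_u|=RNs$, so these parts together contain $RH$ vertices. Lemma~\ref{lem:block-budget} gives $H\le ck$, hence $RH\le Rck=n$. The number $n-RH$ of vertices in $A_1$ is therefore a nonnegative integer, and the total is exactly $n$.

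\emph{Minimum degree.} Vertices of $A$ and vertices of satellite cliques are treated separately. Every vertex of $A$ lies in the clique $A$, so its degree is at least $|A|-1$. Since $R\ge1$ and Lemma~\ref{lem:block-budget} gives $NL\ge k+1$, we have $|A|\ge|A_0|=RNL\ge k+1$, and so these vertices have degree at least $k$. For a vertex $v\in C_u$, the bipartite graph supplied by Lemma~\ref{lem:attachment}(i) gives $v$ at least $k$ neighbours in $B_u\subseteq A_0$. Its $s-1$ neighbours inside $C_u$ only add to this.

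To invoke Lemma~\ref{lem:attachment}, its hypotheses must hold, and this is the step needing care. The construction already recorded $(1+\delta)k/\rho\le|B_u|\le Kk$ for large $k$. The remaining condition is $k^{1/2}\le|C_u|=s=\lceil k^{1/2}\rceil\le k$, which is immediate for $k\ge1$. All required lower bounds on $k$ depend only on $\sigma,\varepsilon$ (through $\delta,\beta,W,K$), not on $n$. The same holds for the applicability of Lemma~\ref{lem:set-system}, via \eqref{eq:N-bounds-new}, which makes $N\ge N_0$ for large $k$.

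\emph{Connectivity.} $G[A]$ is a clique and hence connected. Each $C_u$ is a clique and is nonempty, since $s\ge1$. By Lemma~\ref{lem:attachment}(i) each of its vertices has at least $k\ge1$ neighbours in $A_0$, so every satellite clique is joined to $A$. Hence $G$ is connected, completing the proof. The only point requiring any thought is confirming that the parameter ranges make Lemmas~\ref{lem:set-system} and~\ref{lem:attachment} applicable; the rest is bookkeeping.
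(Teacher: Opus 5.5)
Your proposal is correct and follows the paper's proof essentially step for step. You get the vertex count from $RH\le Rck=n$, the degree bound from the clique $A$ with $|A|\ge RNL\ge k+1$ together with Lemma~\ref{lem:attachment}\textup{(i)} for satellite vertices, and connectivity through $A$; your extra check that Lemmas~\ref{lem:set-system} and~\ref{lem:attachment} apply is done in the paper in the construction paragraph rather than in this proof.
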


\begin{proof}
By construction, $|V(G)|=RH+(n-RH)=n$. By
Lemma~\ref{lem:attachment}\textup{(i)}, every vertex outside $A$
has at least $k$ neighbors in $A$, and hence has degree at least
$k$. On the other hand, Lemma~\ref{lem:block-budget} gives
$
|A|\ge |A_0|=RNL\ge NL\ge k+1.
$
Since $A$ is a clique, every vertex of $A$ has degree at least
$|A|-1\ge k$. Thus $\delta(G)\ge k$. Furthermore, $A$ is connected
and every vertex outside $A$ has a neighbor in $A$, so $G$ is
connected.
\end{proof}

\begin{lemma}\label{lem:small-dominating-set}
The graph $G$ satisfies
$\gamma(G)\le t+1$. 
\end{lemma}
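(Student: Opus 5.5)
To prove $\gamma(G)\le t+1$, I will write down an explicit dominating set of size $t+1$: one vertex from each satellite clique together with a single vertex of the clique $A$.

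For every $u\in U$ fix an arbitrary vertex $v_u\in C_u$ (possible since $s\ge1$), and let $a^*$ be any vertex of $A$ (nonempty since $|A_0|\ge NL\ge k+1$ by Lemma~\ref{lem:block-budget}). Set
\[
S=\{v_u: u\in U\}\cup\{a^*\},\qquad |S|\le |U|+1=t+1 .
\]

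Domination splits into two cases according to where a vertex $w$ lies. If $w\in A$, then $w=a^*$ or $w$ is adjacent to $a^*$, because $A=A_0\cup A_1$ is a clique. Otherwise $w$ lies in some satellite clique $C_u$, since $V(G)=A\cup\bigcup_{u}C_u$ by construction. Then $w=v_u$ or $w$ is adjacent to $v_u$, because $C_u$ is a clique. Hence $S$ is a dominating set and $\gamma(G)\le t+1$.

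There is no real obstacle here. The only points to check are that the vertex set is exactly $A\cup\bigcup_u C_u$, that each $C_u$ and $A$ are cliques, and that $A\neq\emptyset$; all of these come directly from the construction.
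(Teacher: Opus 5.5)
Your proof is correct and is essentially the paper's argument: the paper also takes one vertex from each satellite clique $C_u$ together with a single vertex of the clique $A$, and uses that $A$ and every $C_u$ are cliques. Your additional checks that $A\neq\emptyset$ and $V(G)=A\cup\bigcup_{u}C_u$ are implicit in the paper's version.
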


\begin{proof}
For each $u\in U$, choose a vertex $c_u\in C_u$, and choose a
vertex $v\in A$. Let
$
S=\{v\}\cup\{c_u:u\in U\}.
$
Since $A$ and each $C_u$ are cliques, every vertex of $A\setminus S$
is adjacent to $v$, and every vertex of $C_u\setminus S$ is adjacent
to $c_u$. Thus $S$ is a dominating set of $G$. Since $|U|=t$, we have
$|S|=t+1$, and hence
$
\gamma(G)\le t+1.
$
\end{proof}

\subsection{Estimate the connected domination number}

The following proposition gives the key lower bound on the size of a connected dominating set.

\begin{proposition}\label{prop:key-count}
Every connected dominating set $S$ of $G$ satisfies
$
|S|>
 t+(1-\delta)^2R
\frac{\log N}{-\log(1-p)}.
$
\end{proposition}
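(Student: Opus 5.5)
Fix a connected dominating set $S$ and split it as $S=(S\cap A)\cup\bigcup_{u\in U}(S\cap C_u)$. The plan is to charge at least one vertex of $S$ to each satellite clique, which accounts for the term $t$. Then, separately for each copy $\cF_i$, I will show that $S\cap A_0$ must contain at least $(1-\delta)^2\frac{\log N}{-\log(1-p)}$ vertices with labels from $\cF_i$. Summing over the $R$ disjoint copies then gives the strict inequality.

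\textbf{Step 1 (one vertex per clique).} Each $C_u$ is nonempty and must be dominated. If $S\cap C_u=\emptyset$, then every vertex of $C_u$ has a neighbor in $S\cap B_u$. In that case $S\cap B_u$ dominates $C_u$, so by Lemma~\ref{lem:attachment}(ii) we get $|S\cap B_u|>\frac{(1-\delta)\log s}{-\log\delta}$, which is of order $\log k$ and hence much larger than $\log N=O(\log\log k)$. I will handle the two possibilities uniformly as follows. Call $u$ \emph{internal} if $S\cap C_u\ne\emptyset$.

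\textbf{Step 2 (connectivity forces a cover).} Fix an internal $u$. The clique $C_u$ has neighbors only in $B_u\subseteq A_0$, and $S$ is connected. I also need $|S|\ge2$, which holds since no single vertex dominates $G$. Connectivity then forces $S$ to contain a vertex of $C_u$ with an $S$-neighbor in $B_u$, so $S\cap B_u\neq\emptyset$. Hence some label $F_{i,j}$ with $u\in F_{i,j}$ appears in $S$. Let $J_i$ be the set of labels $j$ such that some $a_{i,j,r}\in S$. If every $u\in U_i$ were internal, the sets $\{F_{i,j}:j\in J_i\}$ would cover $U_i$. Lemma~\ref{lem:set-system}(iii) would then give $|S\cap A_0\cap(\text{copy } i)|\ge|J_i|>\frac{(1-\delta)\log N}{-\log(1-p)}$.

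\textbf{Step 3 (mixed case).} For the non-internal elements $u\in U_i$, Step 1 gives $|S\cap B_u|\ge h:=\frac{(1-\delta)\log s}{\mu}$, and these vertices of $A_0$ lie inside copy $i$. I will form $J_i'$ by adding to $J_i$ one label $F_{i,j}\ni u$ for each non-internal $u$; such a label exists since $d_{\cF_i}(u)>0$. Then $J_i'$ covers $U_i$, so $|J_i'|>\frac{(1-\delta)\log N}{-\log(1-p)}$.

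\textbf{Step 4 (counting).} Let $m_i$ be the number of non-internal elements of $U_i$. The count will be
\[
|S\cap C(U_i)|+|S\cap A_0^{(i)}|\ge (N-m_i)+\max\{|J_i|,\,m_i h/\Delta\},
\]
where $\Delta\le(1+\delta)DL$ bounds how many $B_u$ a single vertex of $A_0$ can lie in. This is the step I expect to be the real obstacle. A vertex $a_{i,j,r}$ lies in $B_u$ for up to $|F_{i,j}|\le(1+\delta)D$ elements $u$, not $(1+\delta)DL$ of them. So the honest bound is $|S\cap A_0^{(i)}|\ge m_i h/((1+\delta)D)$.

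\textbf{Why the loss is affordable.} Since $h/((1+\delta)D)\approx\frac{(1-\delta)\log k}{2\mu(1+\delta)\beta\log k}$, condition \eqref{eq:C6} makes this ratio exceed $2/\delta$. So each non-internal $u$ is paid for at least twice over, covering both its missing clique vertex and its extra label in $J_i'$. Concretely, $|S\cap A_0^{(i)}|\ge\max\{|J_i|,\tfrac{2}{\delta}m_i\}\ge(1-\delta)(|J_i|+m_i)+m_i$. Using $|J_i|+m_i\ge|J'_i|$ and Lemma~\ref{lem:set-system}(iii), the vertices of $S$ in copy $i$ then number more than $N+(1-\delta)^2\frac{\log N}{-\log(1-p)}$. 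Summing over $i=1,\dots,R$, and using that the copies use disjoint vertex sets, gives the proposition.
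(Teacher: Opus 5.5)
Your argument is correct and is essentially the paper's. It uses the same ingredients: Lemma~\ref{lem:set-system}(iii) applied to the labels met by $S\cap A_0$, Lemma~\ref{lem:attachment}(ii) for the satellite cliques missed by $S$, and double counting via $|F_{i,j}|\le(1+\delta)D$; your per-copy inequality $\max\{|J_i|,\frac{2}{\delta}m_i\}\ge(1-\delta)(|J_i|+m_i)+m_i$ plays the role of the paper's global bound $r<\delta q$.

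Two small fixes are needed:
\begin{itemize}
\item In Step~2, $|S|\ge 2$ is not the condition you need, since $S$ could consist of two vertices of $C_u$. What you need is $S\not\subseteq C_u$, which holds because $S$ must dominate some other satellite clique $C_{u'}$.
\item The augmentation to $J_i'$ is unnecessary. For a non-internal $u$, the set $S\cap B_u$ is nonempty, so $J_i$ already contains a label containing $u$ and $J_i$ itself covers $U_i$. Then a ratio of $1/\delta$ suffices instead of $2/\delta$, which is why the paper can use the cruder bound $D\le 2\beta\log k$.
\end{itemize}
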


\begin{proof}
Let
$X=S\cap A_0,$ and $q=|X|$. 
For each $i\in[R]$, let
\[
J_i
=
\left\{
j\in[N]:
\text{there exists }\ell\in[L]\text{ such that }
a_{i,j,\ell}\in X
\right\}.
\]
Suppose that the sets indexed by $J_i$ do not cover $U_i$, and choose
$
u\in U_i\setminus\bigcup_{j\in J_i}F_{i,j}.
$
Then $X\cap B_u=\emptyset$. If $S\cap C_u=\emptyset$, then, since the only neighbors of $C_u$ outside itself lie in $B_u$, the set $S$ does not dominate $C_u$, a contradiction.
Suppose $S\cap C_u\ne\emptyset$. 
Since \(N \ge 2\) for all sufficiently large \(k\), the construction
contains a satellite clique \(C_{u'}\) distinct from \(C_u\). The satellite
cliques are pairwise anticomplete, so a set contained entirely in \(C_u\)
cannot dominate \(C_{u'}\). Thus \(S \setminus C_u \neq \emptyset\). Since
\(X \cap B_u = \emptyset\), there is no edge between
\(S \cap C_u\) and \(S \setminus C_u\), again a contradiction.
Therefore the labels indexed by $J_i$ cover $U_i$. Lemma~\ref{lem:set-system}(iii) gives
$
|J_i|>
(1-\delta)\frac{\log N}{-\log(1-p)}.
$
Since each vertex of $X$ has a unique pair of indices $(i,j)$ and every $j\in J_i$ requires at least one vertex $a_{i,j,\ell}\in X$, we have
\begin{equation}\label{eq:q-cover-new}
q\ge\sum_{i=1}^R|J_i|
>
R(1-\delta)\frac{\log N}{-\log(1-p)}.
\end{equation}

Let
$Z=\{u\in U:S\cap C_u=\emptyset\},$
and let $r=|Z|$. 
For $u\in Z$, the set $X\cap B_u$ must dominate $C_u$. Lemma~\ref{lem:attachment}(ii), together with $-\log(1-\rho)=\mu$, gives
$
|X\cap B_u|>
\frac{(1-\delta)\log s}{\mu}.
$
Summing over $u\in Z$,
\begin{equation}\label{eq:lower-double-new}
r\frac{(1-\delta)\log s}{\mu}
\le \sum_{u\in Z}|X\cap B_u|.
\end{equation}
For every $v\in X$, write $v=a_{i,j,\ell}$ for some
$i\in[R]$, $j\in[N]$, and $\ell\in[L]$. Then
\[
\bigl|\{u\in Z:v\in B_u\}\bigr|
=
|Z\cap F_{i,j}|
\le |F_{i,j}|
\le (1+\delta)D.
\]
Therefore, double-counting the pairs $(v,u)$ with $u\in Z$ and $v\in X\cap B_u$ gives
\begin{equation}\label{eq:upper-double-new}
\sum_{u\in Z}|X\cap B_u|=\sum_{v\in X}\bigl|\{u\in Z:v\in B_u\}\bigr|
\le(1+\delta)Dq.
\end{equation}
By \eqref{eq:D-bounds-new}, $D<D_0+1\le2\beta\log k$ for all sufficiently large $k$, while $s=\lceil k^{\frac{1}{2}}\rceil$ gives $\log s\ge\frac{\log k}{2}$. 
Combining \eqref{eq:lower-double-new} and
\eqref{eq:upper-double-new}, and using
\eqref{eq:C6}, we obtain
\[
r
\le
\frac{(1+\delta)D\mu}{(1-\delta)\log s}\,q
\le
\frac{4(1+\delta)\beta\mu}{1-\delta}\,q
<\delta q.
\]

Exactly $t-r$ satellite cliques meet $S$, and each contributes at least one vertex outside $X$. Therefore
\[
|S|\ge q+t-r>t+(1-\delta)q.
\]
Substituting \eqref{eq:q-cover-new} proves the proposition.
\end{proof}

\subsection{Proof of the lower bound in Theorem \ref{thm:main}}


\begin{proof}[Proof of the lower bound in Theorem \ref{thm:main}]
Let
$L_k=\log\log(k+1),$
and $\lambda=-\log(1-p)$. 
By Proposition~\ref{prop:admissible}, the graph $G$ is a connected $n$-vertex graph with $\delta(G)\ge k$. Hence Lemma~\ref{lem:small-dominating-set}
and Proposition~\ref{prop:key-count} imply
\[
M(n,k)>
(1-\delta)^2R\frac{\log N}{\lambda}-1.
\]
Since $p=\frac{a}{c}$,
we have $\lambda=\log\frac{c}{c-a}$ and
$\frac{1}{c\lambda}=\frac{1}{a}\Phi\left(\frac{c}{a}\right)$.
Recall that  $R=\frac{n}{ck}$ and $\nu=\frac{n}{k+1}$. Note that $\frac{c}{a}>1$ and $0<\frac{(1-\delta)^2}{a}<1$. By Lemma~\ref{lem:Phi}, $0<\Psi_\delta(c)<1$. Then 
\begin{align*}
\frac{M(n,k)}{\nu L_k}
&>
\frac{k+1}{k}\,
\Psi_\delta(c)\,
\frac{\log N}{L_k}
-\frac{1}{\nu L_k}\\
&\ge \Psi_\delta(c)-
\left|
\frac{k+1}{k}\cdot \frac{\log N}{L_k}-1
\right|\Psi_\delta(c)
-\frac{1}{\nu L_k}.
\end{align*}
By \eqref{eq:N-bounds-new}, we have
$
|\log N-L_k|\le b_3$,
so
\[
\left|
\frac{k+1}{k}\cdot \frac{\log N}{L_k}-1
\right|
=
\left|
\left(1+\frac{1}{k}\right)
\frac{\log N-L_k}{L_k}
+\frac{1}{k}
\right|
\le
\frac{1}{k}
+
\left(1+\frac{1}{k}\right)\frac{b_3}{L_k}.
\]
Hence
$
\frac{k+1}{k}\cdot \frac{\log N}{L_k}\rightarrow1,
$
and the difference from $1$ is bounded by an expression depending only on $k$, $\sigma$, and $\varepsilon$, not on $n$. Also $\frac{1}{\nu L_k}\le\frac{1}{(1+\sigma)L_k}$. 
Since $0<\Psi_\delta(c)<1$,
\[
\Psi_\delta(c)-
\left|
\frac{k+1}{k}\cdot \frac{\log N}{L_k}-1
\right|\Psi_\delta(c)
-\frac{1}{\nu L_k}
\ge
\Psi_\delta(c)
-
\left(
\frac{1}{k}
+
\left(1+\frac{1}{k}\right)\frac{b_3}{L_k}
+
\frac{1}{(1+\sigma)L_k}
\right).
\]
Evidently, $\frac{1}{k}
+
\left(1+\frac{1}{k}\right)\frac{b_3}{L_k}
+
\frac{1}{(1+\sigma)L_k}\to 0$
 as $k\to\infty$.
Hence, by increasing $k_0=k_0(\sigma,\varepsilon)$ if necessary,
we may ensure that, for every \(k\ge k_0\),
$
\frac{1}{k}
+
\left(1+\frac{1}{k}\right)\frac{b_3}{L_k}
+
\frac{1}{(1+\sigma)L_k}
\le \frac{\varepsilon}{2}.
$
Consequently, 
\begin{equation}\label{eq:normalized-simple}
\frac{M(n,k)}{\nu L_k}
>
\Psi_\delta(c)-\frac{\varepsilon}{2}.
\end{equation}
Next, we show that \begin{equation}\label{bo}
\Psi_\delta(c)>\Phi(\nu)-\frac{\varepsilon}{4}.
\end{equation}
 There are two cases.
If $x=\frac{n}{k}\le W$, then $R=1$ and
$
c=x=\nu\left(1+\frac{1}{k}\right)\ge\nu$.
So we have by \eqref{eq:C4} and the monotonicity of $\Phi$ that
\[
\Psi_\delta(c)
>\Phi(c)-\frac{\varepsilon}{8}
\ge\Phi(\nu)-\frac{\varepsilon}{8}>\Phi(\nu)-\frac{\varepsilon}{4}.
\]
If $x>W$, then \(c>\frac{W}{2}\). Since \(a<\frac{3}{2}\), we have
$
\frac{c}{a}>\frac{W}{3}.
$
Hence, by the monotonicity of \(\Phi\) and \eqref{eq:C1},
$
\Phi\left(\frac{c}{a}\right)
>
\Phi\left(\frac{W}{3}\right)
>
1-\frac{\varepsilon}{8}.
$
Combining this with \eqref{eq:C5}, we have
\[
\Psi_\delta(c)
=
\frac{(1-\delta)^2}{a}
\Phi\left(\frac{c}{a}\right)
>
\left(1-\frac{\varepsilon}{8}\right)^2
>1-\frac{\varepsilon}{4}>\Phi(\nu)-\frac{\varepsilon}{4}.
\]
Thus, in either case, we have \eqref{bo}.
Combining \eqref{eq:normalized-simple} and \eqref{bo}, we have
\[
\frac{M(n,k)}{\nu L_k}-\Phi(\nu)
>-\frac{3\varepsilon}{4}
>-\varepsilon,
\]
as desired.
\end{proof}

\section{Proofs of Theorem~\ref{thm:full-range} and Corollaries~\ref{cor:theta} and~\ref{cor:large-ratio}}

\begin{proof}[Proof of Theorem~\ref{thm:full-range}]
Let
$L_k=\log\log(k+1)$
and
$A(n,k)=\frac{M(n,k)}{\nu L_k}$. Fix $\eta>0$.

By Lemma~\ref{lem:Phi}, $\Phi(x)\to0$ as $x\to1^+$.
Hence we may choose $\sigma_0>0$, depending only on $\eta$,
such that
$
\Phi(1+\sigma_0)<\frac{\eta}{2}.
$

Applying Theorem~\ref{thm:main} with
$\sigma=\sigma_0$ and  $\varepsilon=\frac{\eta}{2}$, we obtain an
integer $k_1=k_1(\sigma_0,\eta)$ such that
$
\left|A(n,k)-\Phi(\nu)\right|
\le \frac{\eta}{2}
$
for all $k\ge k_1$ and $\nu\ge1+\sigma_0$.

Assume that
$
1<\nu<1+\sigma_0.
$
By Proposition~\ref{prop:upper},
\[
M(n,k)
\le
\frac{\log \nu+L_k}
{\log \frac{\nu}{\nu-1} }
+2\nu+1.
\]
As
$
\Phi(\nu)
=
\frac{1}
{\nu\log\frac{\nu}{\nu-1}}$ and $\nu>1$,
\begin{align*}
A(n,k)
&\le\frac{\log \nu+L_k}
{\nu L_k\log \frac{\nu}{\nu-1} }
+\frac{2\nu+1}{\nu L_k}\\
&=
\Phi(\nu)
\left(
1+\frac{\log\nu}{L_k}
\right)
+
\frac{2+\frac{1}{\nu}}{L_k}
\le
\Phi(\nu)
\left(
1+\frac{\log\nu}{L_k}
\right)
+
\frac{3}{L_k}.
\end{align*}
Trivially, 
$\gamma_c(H)\ge\gamma(H)$ for every connected graph $H$, so  $A(n,k)\ge0$. It then follows that
\[
-\Phi(\nu)
\le
A(n,k)-\Phi(\nu)
\le
\frac{\Phi(\nu)\log\nu+3}{L_k}.
\]
Consequently,
\[
\left|A(n,k)-\Phi(\nu)\right|
\le
\max\left\{
\Phi(\nu),
\frac{\Phi(\nu)\log\nu+3}{L_k}
\right\}.
\]
Since $\Phi$ is increasing and
$1<\nu<1+\sigma_0$, we have
$\Phi(\nu)\le\Phi(1+\sigma_0)<\frac{\eta}{2}$,
and
\[
\frac{\Phi(\nu)\log\nu+3}{L_k}\le \frac{
\Phi(1+\sigma_0)\log(1+\sigma_0)+3
}{L_k}.
\]
Since $L_k\to\infty$, there exists an
integer $k_2=k_2(\sigma_0,\eta)$ such that
\[
\frac{
\Phi(1+\sigma_0)\log(1+\sigma_0)+3
}{L_k}
<
\frac{\eta}{2}
\]
for all $k\ge k_2$.
Therefore,
\[
\left|A(n,k)-\Phi(\nu)\right|
\le
\max\left\{
\Phi(1+\sigma_0),
\frac{
\Phi(1+\sigma_0)\log(1+\sigma_0)+3
}{L_k}
\right\}<\frac{\eta}{2}
\]
for all $k\ge k_2$ and $1<\nu<1+\sigma_0$.

The number $\sigma_0$ was chosen only in terms of $\eta$.
Therefore, both $k_1$ and $k_2$ depend only on $\eta$ and
not on $n$. Let
$
K=\max\{k_1,k_2,3\}.
$
Let $k\ge K$ and let $n>k+1$ be an integer. Then $\nu>1$.
If $\nu\ge1+\sigma_0$, the estimate obtained from
Theorem~\ref{thm:main} applies. Otherwise,
$1<\nu<1+\sigma_0$, and the preceding estimate applies.
In either case,
\[
\left|
\frac{M(n,k)}
{\nu\log\log(k+1)}
-
\Phi(\nu)
\right|
<\eta.
\]
This completes the proof.
\end{proof}

\begin{proof}[Proof of Corollary~\ref{cor:theta}]
Fix $C>1$. By Lemma~\ref{lem:Phi},
$
0<\Phi(C)<1.
$
Let
$
\eta=\frac{\Phi(C)}{2}$ and $\nu=\frac{n}{k+1}$.
By Theorem~\ref{thm:full-range}, there exists an integer
$k_0=k_0(C)$ such that, for all integers $k\ge k_0$ and
$n>k+1$,
\[
\left|
\frac{M(n,k)}
{\nu\log\log(k+1)}
-
\Phi(\nu)
\right|
<
\frac{\Phi(C)}{2}.
\]

Now suppose that $n\ge C(k+1)$. Then $\nu\ge C$, so the
monotonicity of $\Phi$ gives
$
\Phi(\nu)\ge\Phi(C).
$
Consequently,
\[
\frac{M(n,k)}
{\nu\log\log(k+1)}
>
\Phi(\nu)-\frac{\Phi(C)}{2}
\ge
\frac{\Phi(C)}{2},
\]
so
\[
M(n,k)>\frac{\Phi(C)}{2} \nu\log\log(k+1).
\]
On the other hand, since $\Phi(\nu)<1$ and $\Phi(C)<1$, we have
\[
\frac{M(n,k)}
{\nu\log\log(k+1)}
<
\Phi(\nu)+\frac{\Phi(C)}{2}
<
\frac{3}{2},
\]
so
\[
M(n,k)
<
\frac{3}{2}\nu \log\log(k+1).
\]
This completes the proof.
\end{proof}

\begin{proof}[Proof of Corollary~\ref{cor:large-ratio}]
Let
$
\nu=\frac{n}{k+1}.
$
Since $\nu\to\infty$, we have $n>k+1$ for all sufficiently
large $k$. By Theorem~\ref{thm:full-range},
\[
\frac{M(n,k)}
{\nu\log\log(k+1)}
=
\Phi(\nu)+o(1).
\]
Lemma~\ref{lem:Phi} gives $\Phi(\nu)\to1$. Therefore,
$
\frac{M(n,k)}
{\nu\log\log(k+1)}
\to1.
$
Substituting $\nu=\frac{n}{k+1}$ gives
$
M(n,k)
\sim
\frac{n}{k+1}\log\log(k+1).
$
\end{proof}

\medskip

\noindent {\bf Acknowledgements.}
This work was supported by the
National Natural Science Foundation of China (No.~12571364).

\medskip
\noindent\textbf{Declaration of competing interest}

\noindent There is no competing interest.

\medskip
\noindent\textbf{Data availability}

\noindent There is no data associated with this paper.

\end{document}